\documentclass[a4paper, 11pt]{amsart}

\usepackage[french, english]{babel} 
\usepackage[T1]{fontenc}
\usepackage[ansinew]{inputenc}

\title[Fractional Schr\"odinger equations on the circle]{A smoothing effect for the fractional Schr\"odinger equations on the circle and observability}

\date{}

\author{Paul Alphonse}
\address{(Paul Alphonse) ENS de Lyon, UMPA, UMR CNRS 5669, Lyon, France}
\email{paul.alphonse@ens-lyon.fr}

\author{Nikolay Tzvetkov}
\address{(Nikolay Tzvetkov) ENS de Lyon, UMPA, UMR CNRS 5669, Lyon, France}
\email{nikolay.tzvetkov@ens-lyon.fr}

\keywords{Fractional Schr\"odinger equations; smoothing effect; observability.}

\makeatletter 
	\@namedef{subjclassname@2020}{\textup{2020} Mathematics Subject Classification}
\makeatother

\subjclass[2020]{35Q93, 35J10, 35B65, 47D06}

\usepackage[top=3cm, bottom=2cm, left=3cm, right=3cm]{geometry}		

\frenchbsetup{StandardLists=true}
\usepackage{enumitem}

\usepackage{bbm}

\usepackage{array}						

\usepackage{amsfonts}
\usepackage{mathtools}
\usepackage{amsthm}
\usepackage{amssymb}

\usepackage{stmaryrd}

\usepackage{comment}

\usepackage{bbm}

\usepackage[dvipsnames]{xcolor}

\numberwithin{equation}{section}

\usepackage{hyperref}

\hypersetup{	
colorlinks=true,
breaklinks=true,
urlcolor= RedViolet,
linkcolor= red,
citecolor=Blue
}

\usepackage[scr]{rsfso}

\usepackage{comment}


\newtheorem{thm}{Theorem}[section]
\newtheorem{prop}[thm]{Proposition}
\newtheorem{lem}[thm]{Lemma}

\theoremstyle{definition}

\newtheorem{rk}[thm]{Remark}

\DeclareMathOperator{\supp}{supp}

\DeclareMathOperator{\vect}{vect}

\begin{document}

\sloppy

\begin{abstract} We show that, after a  renormalisation, one can define the square of the modulus of the solution of the fractional  Schr\"odinger equations on the circle with data in Sobolev spaces of arbitrary negative index. As an application, we obtain observability estimates with rough controls.
\end{abstract}

\maketitle

\selectlanguage{english}

\section{Introduction and main results}
We consider the fractional Schr\"odinger equation on the circle
\begin{equation}\label{eq:schro}\tag{$S_{\alpha}$}
	i\partial_tu + \vert D_x\vert^{\alpha}u = 0,\quad (t,x)\in\mathbb R\times\mathbb T,
\end{equation}
where $\mathbb T = \mathbb R/2\pi\mathbb Z$ and $\alpha>1$ is a positive real number.
In this equation, the operator $\vert D_x\vert^{\alpha}$ is the Fourier multiplier defined on  $\mathscr S'(\mathbb T)=(C^\infty(\mathbb T))'$ by
$$\widehat{\vert D_x\vert^{\alpha}f}(n)=|n|^\alpha\widehat f(n),\quad n\in\mathbb Z,\, f\in \mathscr S'(\mathbb T),$$
where 
$\widehat\cdot$ denotes the Fourier transform on $\mathscr S'(\mathbb T)$, given by 
$$\widehat f(n)= (2\pi)^{-1}\langle f, e^{-inx}\rangle_{\mathscr S'(\mathbb T),C^\infty(\mathbb T)},\quad n\in\mathbb Z.$$
When $\alpha$ is an even integer $\vert D_x\vert^{\alpha}$  is simply the differential operator 
$
(-1)^{\alpha/2}\partial_x^\alpha.
$
For every $u_0\in \mathscr S'(\mathbb T)$, the unique solution in $C(\mathbb R,   \mathscr S'(\mathbb T))$ of the equation \eqref{eq:schro} 
with initial datum
$$
u\vert_{t=0}= u_0,
$$
naturally denoted $e^{it\vert D_x\vert^{\alpha}}u_0$, is given by the exponential sum
\begin{equation}\label{parvo}
	(e^{it\vert D_x\vert^{\alpha}}u_0)(x) = \sum_{n\in \mathbb Z}\widehat{u_0}(n)\,e^{it|n|^\alpha}e^{inx}.
\end{equation}
Our goal in this work is to describe a remarkable smoothing property enjoyed by \eqref{parvo} and its application to observability estimates. 
\subsection{Smoothing effect} For every $u_0\in \mathscr S'(\mathbb T)$ and every $N\geq0$, we denote by $\Pi_Nu_0$ the truncation of $u_0$ at the energy level $N$, that is,
\begin{equation}\label{20022024E1}
	(\Pi_Nu_0)(x) = \sum_{\vert n\vert\le N}\widehat{u_0}(n)\,e^{inx}.
\end{equation}
Of course $\Pi_Nu_0\in C^\infty(\mathbb T)$. 
For $\sigma\in \mathbb R$, we define the operator $\langle D_x\rangle^{\sigma}$ on  $\mathscr S'(\mathbb T)$ as
$$\widehat{\langle D_x\rangle^{\sigma}f}(n)=\langle n \rangle^\sigma\widehat{f}(n),\quad n\in\mathbb Z,\, f\in \mathscr S'(\mathbb T),$$
where $\langle n \rangle=(1+n^2)^{1/2}$. We can therefore define the Sobolev spaces $H^\sigma(\mathbb T)$ via the norm
$$
\| f\|_{H^\sigma(\mathbb T)}=\| \langle D_x\rangle^{\sigma} f\|_{L^2(\mathbb T)}\,.
$$
Let $u_0 \in L^2(\mathbb T)$. Then, ignoring the time oscillations, it follows directly from \eqref{parvo} that 
$$
\langle D_x\rangle^{\sigma}(e^{it\vert D_x\vert^{\alpha}}u_0)\in C(\mathbb R,H^{-\sigma}(\mathbb T)),
$$
and that 
\begin{equation}\label{first_convergence}
\langle D_x\rangle^{\sigma}(e^{it\vert D_x\vert^{\alpha}}u_0)
\, = \lim_{N\rightarrow+\infty}
\langle D_x\rangle^{\sigma}(e^{it\vert D_x\vert^{\alpha}}\Pi_Nu_0) \quad\text{in $L^\infty (\mathbb R, H^{-\sigma}(\mathbb T))$}.
\end{equation}
Suppose that $\sigma>0$. 
Then, the convergence \eqref{first_convergence} is too weak to imply a convergence of nonlinear expressions of 
$
\langle D_x\rangle^{\sigma}(e^{it\vert D_x\vert^{\alpha}}\Pi_Nu_0).
$
For instance, there is no hope to make converge the sequence of $C^\infty (\mathbb R\times\mathbb T)$ functions
\begin{equation}\label{diverge}
	\big\vert\langle D_x\rangle^{\sigma}(e^{it\vert D_x\vert^{\alpha}}\Pi_N u_0)\big\vert^2,
\end{equation}
simply because their zero Fourier coefficients (in $x$)  equal the numbers 
$$
	\Vert\Pi_Nu_0\Vert^2_{H^{\sigma}(\mathbb T)},
$$
which is a sequence of positive numbers which diverges as soon as $u_0 \in L^2(\mathbb T)$ is such that $u_0 \notin H^\sigma(\mathbb T)$. 
Remarkably, this is the only obstruction to the convergence of \eqref{diverge}, as shown by the next statement which is the first main result in this paper.
\begin{thm}\label{thm:se} Let $\alpha>1$ be a positive real number and $u_0\in L^2(\mathbb T)$. For every $\sigma>0$, the following limit exists in $\mathscr S'(\mathbb R\times\mathbb T)$
\begin{equation}\label{13022025E1}
:\hspace{-3pt}\big\vert\langle D_x\rangle^{\sigma}(e^{it\vert D_x\vert^{\alpha}}u_0)\big\vert^2\hspace{-3pt}:
\, = \lim_{N\rightarrow+\infty}\Big(\big\vert\langle D_x\rangle^{\sigma}(e^{it\vert D_x\vert^{\alpha}}\Pi_Nu_0)\big\vert^2 - \Vert\Pi_Nu_0\Vert^2_{H^{\sigma}(\mathbb T)}\Big).
\end{equation}
Moreover,
$$:\hspace{-3pt}\big\vert\langle D_x\rangle^{\sigma}(e^{it\vert D_x\vert^{\alpha}}u_0)\big\vert^2\hspace{-3pt}:\,\in W^{-s_1,\infty}(\mathbb R, H^{-s_2}(\mathbb T)),$$
with
$$s_1 = \frac{2\sigma}{\alpha-1}\quad\text{and}\quad
s_2 = \left\{\begin{array}{cl}
	2\sigma & \text{when $\sigma>\frac14 - \frac1{4\alpha}$,} \\[5pt]
	\displaystyle\bigg(\frac12-\frac{2\sigma}{\alpha-1}\bigg)_+ & \text{when $\sigma\le\frac14 - \frac1{4\alpha}$,}
\end{array}\right.$$
where $(\frac12-\frac{2\sigma}{\alpha-1})_+$ stands for any positive number $\lambda>0$ such that $\lambda>\frac12-\frac{2\sigma}{\alpha-1}$, and the above convergence actually holds in the space $W^{-s_1,\infty}(\mathbb R, H^{-s_2}(\mathbb T))$.
\end{thm}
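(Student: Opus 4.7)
The plan is to expand the square after subtracting the diagonal and analyze the resulting Fourier expansion. Writing
\[
G^N(t,x) := \bigl|\langle D_x\rangle^{\sigma}(e^{it|D_x|^{\alpha}}\Pi_Nu_0)\bigr|^2 - \|\Pi_Nu_0\|^2_{H^{\sigma}(\mathbb T)} = \sum_{k\ne 0} F_k^N(t)\, e^{ikx},
\]
with
\[
F_k^N(t)=\sum_{|n|,|n-k|\le N}\widehat{u_0}(n)\overline{\widehat{u_0}(n-k)}\langle n\rangle^{\sigma}\langle n-k\rangle^{\sigma} e^{it\omega_{n,k}},\qquad \omega_{n,k}:=|n|^{\alpha}-|n-k|^{\alpha},
\]
and observing that, for $\alpha>1$ and $k\ne0$, the equation $\omega_{n,k}=0$ forces $|n|=|n-k|$, hence $k$ even and $n=k/2$, I would split $F_k^N$ into a \emph{resonant} time-independent term $R_k=\widehat{u_0}(k/2)\overline{\widehat{u_0}(-k/2)}\langle k/2\rangle^{2\sigma}$ (for $k\in 2\mathbb Z\setminus\{0\}$) and a \emph{non-resonant} remainder, and bound each via the elementary identity $\|e^{it\omega}\|_{W^{-s_1,\infty}(\mathbb R)}=\langle\omega\rangle^{-s_1}$ combined with the Minkowski-type bound $\|G^N\|^2_{W^{-s_1,\infty}_tH^{-s_2}_x}\le\sum_{k\ne0}\langle k\rangle^{-2s_2}\|F_k^N\|^2_{W^{-s_1,\infty}_t}$.

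The resonant contribution is time-independent and controlled by $\sum_n\langle n\rangle^{4\sigma-2s_2}|\widehat{u_0}(n)|^2|\widehat{u_0}(-n)|^2\lesssim \|u_0\|_{L^2}^4$, which is finite whenever $s_2\ge 2\sigma$. A short algebraic check shows that the two thresholds of the theorem cross exactly at $\sigma=\tfrac14-\tfrac1{4\alpha}$ and that $(\tfrac12-\tfrac{2\sigma}{\alpha-1})_+\ge 2\sigma$ throughout the second regime, so this bound is covered in both cases.

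For the non-resonant contribution, the triangle inequality yields
\[
\|F_k^{N,\mathrm{nr}}\|_{W^{-s_1,\infty}_t}\le\sum_{\omega_{n,k}\ne 0}\frac{|\widehat{u_0}(n)||\widehat{u_0}(n-k)|\langle n\rangle^{\sigma}\langle n-k\rangle^{\sigma}}{\langle\omega_{n,k}\rangle^{s_1}}.
\]
Setting $s_1=\tfrac{2\sigma}{\alpha-1}$ and $M:=\max(|n|,|n-k|)$, I would split into the regimes $M\ge 2|k|$ and $M\le 2|k|$. In the first, the mean value theorem gives $|\omega_{n,k}|\gtrsim|k|M^{\alpha-1}$; the Sobolev weights $M^{2\sigma}$ cancel exactly (because $s_1(\alpha-1)=2\sigma$), and Cauchy--Schwarz in $n$ yields a bound by $\|u_0\|_{L^2}^2/|k|^{s_1}$. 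In the second, the oddness of $g(n)=n^{\alpha}-(k-n)^{\alpha}$ about $n=k/2$ and the lower bound $g'(k/2)\asymp k^{\alpha-1}$ give the refined estimate $|\omega_{n,k}|\gtrsim k^{\alpha-1}\langle n-k/2\rangle$, and Cauchy--Schwarz yields $\|\cdot\|^2_{W^{-s_1,\infty}_t}\lesssim v_k\,\langle k\rangle^{(1-2s_1)_+}$ with $v_k:=\sum_n|\widehat{u_0}(n)|^2|\widehat{u_0}(n-k)|^2\in\ell^1_k$ of norm $\|u_0\|^4_{L^2}$. Summing against $\langle k\rangle^{-2s_2}$, the first regime contributes $\sum_k\langle k\rangle^{-2s_1-2s_2}$ and the second $\sum_k\langle k\rangle^{1-2s_1-2s_2}v_k$; both are finite precisely when $s_2>\tfrac12-\tfrac{2\sigma}{\alpha-1}$.

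Convergence of $G^N$ in $W^{-s_1,\infty}(\mathbb R,H^{-s_2}(\mathbb T))$ is then obtained by re-running the same estimates on the Cauchy tail $G^M-G^N$, whose summands are supported on pairs with $\max(|n|,|n-k|)>N$, and concluding by dominated convergence on $\widehat{u_0}\in\ell^2$. The main technical obstacle is the refined near-resonance bound $|\omega_{n,k}|\gtrsim k^{\alpha-1}\langle n-k/2\rangle$ in the second regime: replacing it by the naive $|\omega_{n,k}|\gtrsim k^{\alpha-1}$ would only yield $s_2>\tfrac12$, far weaker than the sharp threshold $\tfrac12-\tfrac{2\sigma}{\alpha-1}$ stated in the theorem, and it is this extra $\langle n-k/2\rangle$ factor combined with the convolution-type $\ell^1$-bound on $v_k$ that produces the sharp exponent.
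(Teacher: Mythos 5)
Your proposal is correct, and it takes a genuinely different route from the paper: you compute the norm $\Vert\langle D_t\rangle^{-s_1}G^N(t,\cdot)\Vert_{H^{-s_2}(\mathbb T)}$ directly on the Fourier side and estimate term by term, whereas the paper first proves a \emph{bilinear} estimate (Proposition~\ref{prop:ineqtest}), pairing $e^{it|D_x|^{\alpha}}u_0\overline{e^{it|D_x|^{\alpha}}v_0}$ against an arbitrary test function $\varphi\in\mathscr S(\mathbb R\times\mathbb T)$ via a Schur-test argument, and then deduces Theorem~\ref{thm:se} by the duality identity \eqref{13012025E1}. In fact, the paper explicitly remarks (just after formula \eqref{07022025E1}) that the direct route you take is available but was not pursued because it does not yield the bilinear estimate, which is itself essential in the proof of the observability result of Theorem~\ref{thm:fracobs} (it is applied there with the tensorized test functions $\psi\otimes\varphi$ and $\sigma=\alpha$). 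So what your approach buys is a cleaner, duality-free derivation of the smoothing theorem; what it loses is Proposition~\ref{prop:ineqtest} and hence the input for Section~3. Your refined near-resonance lower bound $|\omega_{n,k}|\gtrsim|k|^{\alpha-1}\langle n-k/2\rangle$ in the regime $\max(|n|,|n-k|)\lesssim|k|$ plays the same structural role as the paper's Lemma~\ref{13032024L1} combined with the factor $\langle|n_1|-|n_2|\rangle$; note that in that regime, when $n$ and $n-k$ have opposite signs, one has $\big||n|-|n-k|\big|=2|n-k/2|$, so the two bounds encode the same cancellation. Two small points worth tightening in a full write-up: the borderline case $2s_1=1$ produces a $\log\langle k\rangle$ factor in the inner Cauchy--Schwarz sum, which is harmless only because $s_2>0$ is enforced; and the Cauchy-tail argument for the actual convergence should be spelled out as you indicate, by inserting the indicator $\mathbf 1_{\{\max(|n|,|n-k|)>N\}}$ into each of the three pieces and letting $\Vert\mathbf 1_{|n|>N}\widehat{u_0}\Vert_{\ell^2}\to0$.
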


\medskip



\medskip

We refer to Remark \ref{10032024R1} where the functional space $W^{-s_1,\infty}(\mathbb R, H^{-s_2}(\mathbb T))$ is defined. 
Moreover, let us add the fact that in the following three situations
$$s_1<\dfrac{2\sigma}{\alpha-1}\quad  \&\quad  s_2\in\mathbb R,\quad   s_1\in\mathbb R\quad  \&\quad  s_2<2\sigma,\quad s_1 = \dfrac{2\sigma}{\alpha-1}\quad  \&\quad  s_2\le\dfrac12 - \dfrac{2\sigma}{\alpha-1},$$
we construct a function $u_0\in L^2(\mathbb T)$ such that the convergence \eqref{13022025E1}
does not hold in the space $W^{-s_1,\infty}(\mathbb R, H^{-s_2}(\mathbb T))$. This implies in particular that when $\sigma>\frac14 - \frac1{4\alpha}$, then the exponents $s_1$ and $s_2$ in Theorem \ref{thm:se} are optimal. However, the situation remains partially unclear when $\sigma\le\frac14 - \frac1{4\alpha}$, since we are not able to perform such a construction when $s_1>\frac{2\sigma}{\alpha-1}$ and $2\sigma\le s_2\le\frac12-\frac{2\sigma}{\alpha-1}$. In this case, we can only conclude to a partial optimality of the exponents $s_1$ and $s_2$.

\medskip

The result of Theorem \ref{thm:se} says that the sequence \eqref{diverge} converges after a renormalization. 
Let us mention that a renormalization of similar spirit is used in the study of nonlinear PDE in the presence of singular randomness (see \cite{Tz}) or in the classical work in quantum field theory (see \cite{Simon}). 
The key difference between  Theorem \ref{thm:se} and \cite{Simon,Tz}  is that in the proof of Theorem \ref{thm:se}, we pass to the limit in a singular nonlinear expression thanks to time oscillation effects while in  \cite{Simon,Tz}  one passes to the limit in a singular nonlinear expression thanks to random oscillation effects. Interestingly, for quadratic expressions in both cases (random or time oscillations), the only obstruction comes from the zero Fourier coefficient. 

\medskip

The result of  Theorem \ref{thm:se}  also says that for every $u_0\in H^s(\mathbb T)$, $s\in\mathbb R$, the sequence 
$
(\partial_x
\vert(e^{it\vert D_x\vert^{\alpha}}\Pi_N u_0)\vert^2)_N
$
converges in $\mathscr S'(\mathbb R\times\mathbb T)$. In other words, for $u_0\in H^s(\mathbb T)$, $s\in\mathbb R$, we may give a sense of the distribution 
$
\partial_x \vert(e^{it\vert D_x\vert^{\alpha}} u_0)\vert^2.
$
\medskip

In Theorem \ref{thm:se} we only deal with a quadratic expression of \eqref{parvo}. It would be interesting to understand how the result extends to higher degree polynomials.
In the case of random oscillations, such an analysis is performed in \cite{Simon}.

\subsection{Observability}
The second main result in this paper is the following one, dealing with the observability properties of the equation \eqref{eq:schro}.

\begin{thm}\label{thm:fracobs} 
Let $\alpha>1$, $T>0$ be a positive time and $b\in L^1(\mathbb T)\setminus\{0\}$ be a non-negative function.
There exists a positive constant $C_{b,T}>0$ such that for every $u_0\in L^2(\mathbb T)$,
\begin{equation}\label{18032024E1}
	\Vert u_0\Vert^2_{L^2(\mathbb T)}\le C_{b,T}\int_0^T\int_{\mathbb T}b(x)\big\vert(e^{it\vert D_x\vert^{\alpha}}u_0)(x)\big\vert^2\,\mathrm dx\,\mathrm dt.
\end{equation}
\end{thm}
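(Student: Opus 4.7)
The plan is a compactness--uniqueness argument, with the smoothing mechanism of Theorem~\ref{thm:se} entering through a quantitative high-frequency diagonal dominance. First I would reduce to $b\in L^\infty(\mathbb T)$: since $b\ge 0$ is not identically zero, $E:=\{b\ge c_0\}$ has positive measure for some $c_0>0$, and $b\ge c_0\mathbf 1_E$, so it is enough to prove the estimate for the bounded weight $c_0\mathbf 1_E$.

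The spectral core is the following claim: for $N$ large enough and every $w\in L^2(\mathbb T)$ with $\Pi_N w=0$,
$$\int_0^T\!\!\int_\mathbb T b\,\vert e^{it\vert D_x\vert^\alpha}w\vert^2\,\mathrm dx\,\mathrm dt\ge\tfrac12 T\widehat b(0)\,\Vert w\Vert_{L^2(\mathbb T)}^2.$$
Expanding $\vert e^{it\vert D_x\vert^\alpha}w\vert^2$ in Fourier and integrating in $t$, the diagonal $k=k'$ contribution equals $T\widehat b(0)\Vert w\Vert_{L^2}^2$ with $\widehat b(0)>0$. The off-diagonal kernel
$$K_N(k,k')=\widehat b(k'-k)\int_0^T e^{it(\vert k\vert^\alpha-\vert k'\vert^\alpha)}\,\mathrm dt,\quad\vert k\vert,\vert k'\vert>N,\ k\neq k',$$
is controlled by a Schur test: the $k'=-k$ piece is bounded by $T\sup_{\vert k\vert>N}\vert\widehat b(-2k)\vert\to 0$ (Riemann--Lebesgue); for $\vert k\vert\neq\vert k'\vert$, bounding the time integral by $2/\bigl\vert\vert k\vert^\alpha-\vert k'\vert^\alpha\bigr\vert$, using the gap $\bigl\vert\vert k\vert^\alpha-\vert k'\vert^\alpha\bigr\vert\ge\alpha N^{\alpha-1}\bigl\vert\vert k\vert-\vert k'\vert\bigr\vert$ (crucially $\alpha>1$), and Cauchy--Schwarz against $\widehat b\in\ell^2(\mathbb Z)$ (granted by $b\in L^\infty\subset L^2$), one gets $\sup_k\sum_{k'}\vert K_N(k,k')\vert=O(N^{-(\alpha-1)})$, hence the off-diagonal operator norm on $\ell^2$ is $o(1)$ as $N\to\infty$.

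This high-frequency estimate, combined with a brutal $L^\infty$-bound on the low-frequency piece and $\varepsilon$-absorption of the cross terms, yields a weak observability
$$\Vert u_0\Vert_{L^2(\mathbb T)}^2\le C_T\int_0^T\!\!\int_\mathbb T b\,\vert e^{it\vert D_x\vert^\alpha}u_0\vert^2\,\mathrm dx\,\mathrm dt+C'_T\Vert u_0\Vert_{H^{-1}(\mathbb T)}^2.$$
A standard contradiction loop then closes the proof: a sequence with $\Vert u_n\Vert_{L^2}=1$ and $\int\!\!\int b\vert e^{it\vert D_x\vert^\alpha}u_n\vert^2\to 0$ extracts to $u_n\rightharpoonup u_\infty$ in $L^2$; the weakly lower semicontinuous functional $v\mapsto\int\!\!\int b\vert v\vert^2$ on $L^2([0,T]\times\mathbb T)$ (continuous since $b\in L^\infty$) forces $e^{it\vert D_x\vert^\alpha}u_\infty$ to vanish on $[0,T]\times E$. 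For unique continuation, since $\alpha>1$ the gap of $(j^\alpha)_{j\ge J_0}$ eventually exceeds $\pi/T$, so by Ingham the tail $\{e^{itj^\alpha}\}_{j\ge J_0}$ is a Riesz sequence in $L^2(0,T)$; biorthogonalizing against the finitely many low-frequency exponentials yields, for each $k_0\in\mathbb Z$, some $\eta_{k_0}\in L^2(0,T)$ with $\int_0^T\eta_{k_0}(t)e^{it\vert k\vert^\alpha}\,\mathrm dt=\delta_{\vert k\vert,\vert k_0\vert}$. Integrating the vanishing identity against $\eta_{k_0}(t)$ gives $\widehat{u_\infty}(k_0)e^{ik_0x}+\widehat{u_\infty}(-k_0)e^{-ik_0x}=0$ a.e.\ on $E$, and $\vert E\vert>0$ rules out $e^{2ik_0x}$ being constant on $E$, forcing $\widehat{u_\infty}(\pm k_0)=0$. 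Thus $u_\infty=0$; Rellich's compact embedding gives $\Vert u_n\Vert_{H^{-1}}\to 0$, and the weak observability yields the contradiction $1\le o(1)$.

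The main obstacle is the Schur estimate on $K_N$: the mere $L^1$ regularity of $b$ precludes $\widehat b\in\ell^1$, so one is forced to trade the frequency gap coming from $\alpha>1$ against the $\ell^2$ regularity of $\widehat b$ via Cauchy--Schwarz. This is precisely the spectral incarnation of the smoothing mechanism of Theorem~\ref{thm:se} in the observability proof.
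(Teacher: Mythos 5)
Your proof is correct, and it is a genuinely different proof from the one in the paper. Three points of divergence are worth recording.

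\textbf{Reduction to an indicator weight.} You first replace $b\in L^1(\mathbb T)\setminus\{0\}$, $b\ge 0$, by $c_0\mathbf 1_E$ with $E=\{b\ge c_0\}$ of positive measure, using the pointwise bound $b\ge c_0\mathbf 1_E$. This is a clean observation that collapses the $L^1$ case onto the bounded/indicator case in one stroke. The paper instead works with $b\in L^1$ throughout, approximating $b$ by smooth $b_j$ and controlling the error via the $L^\infty_xL^2_t$ Strichartz estimate (Lemma~\ref{11032024L1}), then invoking an ergodic averaging argument (Lemma~\ref{11032024L2}) to pass from $\{\tau_{x_0}b\}$ to the constant $1$. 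Your reduction bypasses both the Strichartz estimate and the ergodic argument. (The paper's more elaborate handling is partly motivated by the hope of extending to $\mathbb T^d$, where your Schur test would run into trouble; see the next point.)

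\textbf{Weak observability.} You obtain the estimate $\Vert u_0\Vert^2_{L^2}\le C\int\!\!\int b\vert e^{it\vert D_x\vert^\alpha}u_0\vert^2+C'\Vert u_0\Vert^2_{H^{-1}}$ by a direct Schur test on the Fourier-side kernel $K_N(k,k')$, with the off-diagonal smallness coming from Riemann--Lebesgue (for the $k'=-k$ line) and from the same gap inequality $\vert\vert k\vert^\alpha-\vert k'\vert^\alpha\vert\gtrsim(\vert k\vert+\vert k'\vert)^{\alpha-1}\vert\vert k\vert-\vert k'\vert\vert$ that underlies Lemma~\ref{13032024L1}, traded against $\widehat b\in\ell^2$. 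This is precisely the spectral mechanism of Proposition~\ref{prop:ineqtest}, but you re-derive the needed special case from scratch rather than importing the bilinear estimate. The paper obtains the analogous weak observability (Lemma~\ref{lem:weakobs}, with error $\Vert u_0\Vert^2_{H^{-\alpha}}$) by a compactness contradiction argument that applies Proposition~\ref{prop:ineqtest} to tensor test functions $\psi\otimes\varphi$. Both deliver the same type of a priori estimate; which negative Sobolev norm appears on the right is immaterial since only compactness is used.

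\textbf{Unique continuation.} You establish $\mathcal N_{\mathbf 1_E,T}=\{0\}$ via Ingham's theorem and biorthogonal functionals: the gaps $j^\alpha-(j-1)^\alpha\to\infty$ make the high-frequency exponentials a Riesz sequence in $L^2(0,T)$, and after biorthogonalization against the finitely many low modes one extracts $\widehat{u_\infty}(k_0)e^{ik_0x}+\widehat{u_\infty}(-k_0)e^{-ik_0x}=0$ on $E$, forcing both coefficients to vanish. This is correct; the only slightly glossed step is the passage from the tail Riesz sequence to biorthogonals for the full family $\{e^{itj^\alpha}\}_{j\ge 0}$, which needs the Haraux-type perturbation of Ingham (a finite set of distinct added frequencies preserves minimality). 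The paper instead runs the Bardos--Lebeau--Rauch argument: showing $\mathcal N_{b,T}$ is invariant under $\vert D_x\vert^\alpha$, finite-dimensional by the weak observability plus Rellich, and hence contains an eigenfunction $c_1e^{inx}+c_2e^{-inx}$ vanishing on a set of positive measure --- a softer route that avoids explicit nonharmonic Fourier analysis. The two endgames reach the same conclusion with comparable effort in one dimension; the paper's version is more robust to perturbations of the operator, while yours is more explicit about the role of the frequency gaps.

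The final contradiction step (weak limit $u_\infty$ vanishes, Rellich gives $\Vert u_n\Vert_{H^{-1}}\to 0$, contradiction with $\Vert u_n\Vert_{L^2}=1$) is the same as in the paper. Overall, a valid proof with a lighter toolbox: no Strichartz estimate, no ergodic averaging, no invariance/finite-dimension argument; in exchange, you rely on Ingham-type nonharmonic Fourier analysis and on a one-dimensional Schur test that exploits the gap structure more directly.
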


\medskip

The observability properties of type \eqref{18032024E1} of Schr\"odinger equations on tori $\mathbb T^d$ have been widely studied, mostly in the non-fractional case $\alpha=2$ when $b=\mathbbm1_{\omega}$ and $\omega\subset\mathbb T^d$ is an open set. Among all this literature, the two articles \cite{BZ, LBM} are exceptions, since they consider more general controls $b\in L^1(\mathbb T)$ as in the present paper. Theorem \ref{thm:fracobs} is in particular a generalization of \cite[Lemma 2.4]{BZ} to the fractional case $\alpha>1$. On the other hand, the paper \cite{BZ} allows to treat the two-dimensional case. Our proof basically follows the strategy of \cite{BZ}. However, the techniques used in order to prove Lemma \ref{lem:weakobs} are different from \cite{BZ}. Indeed, the article \cite{BZ} relies on an approach based on results of propagation of singularities \textit{via} the notion of semiclassical defect measure. The present paper, as for it, combines the smoothing effect stated in Theorem \ref{thm:se} (not in its full strength) with an ergodic argument, which is simplified to a projection argument when $b\in L^2(\mathbb T)$ (including the interesting case where $b=\mathbbm1_{\omega}$ and $\omega\subset\mathbb T$ is a measurable set with positive measure). Besides, we hope that our approach will allow us to deal with dispersive equations on higher dimensional tori $\mathbb T^d$.

\medskip

It should be mentioned that Theorem \ref{thm:fracobs} improves
\cite[Theorem 1]{M} when $d=1$, which states an observability estimate for the equation \eqref{eq:schro} in any positive time $T>0$ with localized functions of the form $b=\mathbbm1_{\omega}$ with $\omega\subset\mathbb T$ an open set. On the other hand, \cite[Theorem 1]{M} also treats the higher-dimensional case. 

\medskip

In addition, let us mention that the assumption $\alpha>1$ in Theorem~\ref{thm:fracobs} cannot be relaxed. Indeed, we know from \cite[Theorem 1]{M} that on the one hand, in the situation where $\alpha = 1$, the observability properties of the equation (\hyperref[eq:schro]{$S_1$}) are linked to the so-called Geometric Control Condition on $\omega\subset\mathbb T$. On the other hand, in the setting $0<\alpha<1$, the very same result \cite[Theorem 1]{M} states that the set $\omega\subset\mathbb T$ has to be dense so that an observability estimate holds.

\medskip

{\bf Acknowledgements.} We are grateful to Sebastian Herr for an interesting discussion on the smoothing properties of fractional Schr\"odinger equations. 
We are also very grateful to the anonymous referee for the valuable remarks which lead to the optimality results discussed after Theorem \ref{thm:se}.
N.T. was partially supported by the ANR project Smooth ANR-22-CE40-0017.

\section{Smoothing estimates}

In this section, we give the proof of Theorem \ref{thm:se}. 

\subsection{Prolegomena}\label{sub:pro}

We therefore consider $\alpha>1$ a positive real number, $\sigma>0$ and $u_0\in L^2(\mathbb T)$. Recall that the projectors $\Pi_N$ are defined in \eqref{20022024E1}. First notice that the conservation of mass property of the equation \eqref{eq:schro} implies
$$\Pi_0\big(\big\vert\langle D_x\rangle^{\sigma}(e^{it\vert D_x\vert^{\alpha}}\Pi_Nu_0)\big\vert^2\big) 
= \big\Vert\langle D_x\rangle^{\sigma}(e^{it\vert D_x\vert^{\alpha}}\Pi_Nu_0)\big\Vert^2_{L^2(\mathbb T)}
= \Vert\Pi_Nu_0\Vert^2_{H^{\sigma}(\mathbb T)}.$$
As a consequence, the elements of the sequence
\begin{equation}\label{22022024E3}
	\Big(\big\vert\langle D_x\rangle^{\sigma}(e^{it\vert D_x\vert^{\alpha}}\Pi_Nu_0)\big\vert^2 - \Vert\Pi_Nu_0\Vert^2_{H^{\sigma}(\mathbb T)}\Big)_{N\geq0},
\end{equation}
can be written in the following way
$$\big\vert\langle D_x\rangle^{\sigma}(e^{it\vert D_x\vert^{\alpha}}\Pi_Nu_0)\big\vert^2 - \Vert\Pi_Nu_0\Vert^2_{H^{\sigma}(\mathbb T)} 
= \Pi^{\perp}_0\big(\big\vert\langle D_x\rangle^{\sigma}(e^{it\vert D_x\vert^{\alpha}}\Pi_Nu_0)\big\vert^2\big).$$
We therefore deduce that for every test function $\varphi\in\mathscr S(\mathbb R\times\mathbb T)$, we have
\begin{align*}
	& \int_{\mathbb R}\int_{\mathbb T}\varphi(t,x)\Big(\big\vert\langle D_x\rangle^{\sigma}(e^{it\vert D_x\vert^{\alpha}}\Pi_Nu_0)(x)\big\vert^2 - \Vert\Pi_Nu_0\Vert^2_{H^{\sigma}(\mathbb T)}\Big)\,\mathrm dx\,\mathrm dt \\[5pt]
	= \ & \int_{\mathbb R}\int_{\mathbb T}\varphi(t,x)\,\Pi^{\perp}_0\big(\big\vert\langle D_x\rangle^{\sigma}(e^{it\vert D_x\vert^{\alpha}}\Pi_Nu_0)\big\vert^2\big)(x)\,\mathrm dx\,\mathrm dt \\[5pt]
	= \ & \int_{\mathbb R}\int_{\mathbb T}(\Pi^{\perp}_0\varphi)(t,x)\big\vert\langle D_x\rangle^{\sigma}(e^{it\vert D_x\vert^{\alpha}}\Pi_Nu_0)(x)\big\vert^2\,\mathrm dx\,\mathrm dt.
\end{align*}
The convergence of the sequence \eqref{22022024E3} in $\mathscr S'(\mathbb R\times\mathbb T)$ and the regularity of the limit $:~\hspace{-7pt}\vert\langle D_x\rangle^{\sigma}(e^{it\vert D_x\vert^{\alpha}}u_0)\vert^2\hspace{-3pt}:$ will therefore be a consequence of the following bilinear smoothing estimates.

\begin{prop}\label{prop:ineqtest} For every $\sigma>0$, there exists a positive constant $c_{\sigma}>0$ such that for every test function $\varphi\in\mathscr S(\mathbb R\times\mathbb T)$ and for every initial data $u_0,v_0\in H^{-\sigma}(\mathbb T)$,
\begin{multline*}
	\bigg\vert\int_{\mathbb R}\int_{\mathbb T}(\Pi^{\perp}_0\varphi)(t,x)(e^{it\vert D_x\vert^{\alpha}}u_0)(x)\overline{(e^{it\vert D_x\vert^{\alpha}}v_0)(x)}\,\mathrm dx\,\mathrm dt\bigg\vert \\
	\le c_{\sigma}\Vert\varphi\Vert_{W^{s_1,1}(\mathbb R, H^{s_2}(\mathbb T))}\Vert u_0\Vert_{H^{-\sigma}(\mathbb T)}\Vert v_0\Vert_{H^{-\sigma}(\mathbb T)},
\end{multline*}
with
$$s_1 = \frac{2\sigma}{\alpha-1}\quad\text{and}\quad
s_2 = \left\{\begin{array}{cl}
	2\sigma & \text{when $\sigma>\frac14 - \frac1{4\alpha}$,} \\[5pt]
	\displaystyle\bigg(\frac12-\frac{2\sigma}{\alpha-1}\bigg)_+ & \text{when $\sigma\le\frac14 - \frac1{4\alpha}$,}
\end{array}\right.$$
where $(\frac12-\frac{2\sigma}{\alpha-1})_+$ stands for any positive number $\lambda>0$ such that $\lambda>\frac12-\frac{2\sigma}{\alpha-1}$.
\end{prop}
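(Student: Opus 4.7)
The plan is to reduce the integral to a discrete bilinear sum over pairs $(n,k) \in \mathbb Z^2$ with $k \neq 0$, in which $k$ indexes the $x$-Fourier mode of $\varphi$ and $n$ the $x$-Fourier mode of $u_0$. Expanding $\varphi(t,x) = \sum_k c_k(t) e^{ikx}$ in space and inserting \eqref{parvo}, orthogonality in $x$ together with integration in $t$ transforms the quantity to bound into
\begin{equation*}
2\pi \sum_{k \neq 0}\sum_{n \in \mathbb Z} \widehat{u_0}(n)\,\overline{\widehat{v_0}(n-k)}\,\widetilde{c}_{-k}\big(\phi(n,k)\big),
\end{equation*}
where $\widetilde{c}_k$ is the time Fourier transform of $c_k$ and $\phi(n,k) := |n-k|^\alpha - |n|^\alpha$ is the resonance phase. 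The projector $\Pi_0^\perp$ is what forces $k \neq 0$, and ruling out $k=0$ is the entire point of the renormalisation.

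The next step combines the two elementary bounds $|\widetilde{c}_{-k}(\tau)| \leq \langle \tau\rangle^{-s_1} g_k$ with $g_k := \|c_{-k}(\langle D_t\rangle^{s_1}\varphi)\|_{L^1_t}$ (from $\|\widehat f\|_{L^\infty} \leq \|f\|_{L^1}$) and $\big(\sum_k \langle k\rangle^{2s_2}g_k^2\big)^{1/2} \leq \|\varphi\|_{W^{s_1,1}(\mathbb R, H^{s_2}(\mathbb T))}$ (Minkowski in time). Writing $|\widehat{u_0}(n)| = A_n\langle n\rangle^\sigma$ and $|\widehat{v_0}(n)| = B_n\langle n\rangle^\sigma$, so that $\|A\|_{\ell^2} \sim \|u_0\|_{H^{-\sigma}}$ and $\|B\|_{\ell^2} \sim \|v_0\|_{H^{-\sigma}}$, the whole estimate reduces to controlling
\begin{equation*}
\sum_{k \neq 0}\sum_{n \in \mathbb Z} A_n\,B_{n-k}\,\langle n\rangle^\sigma \langle n-k\rangle^\sigma\,\langle \phi(n,k)\rangle^{-s_1}\,g_k
\end{equation*}
by $\|A\|_{\ell^2}\|B\|_{\ell^2}\,\|\langle k\rangle^{s_2}g_k\|_{\ell^2(\mathbb Z)}$.

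I would then split the $n$-sum into a \textit{good} region where $n(n-k) > 0$ and a \textit{bad} region (of cardinality $\leq |k|+1$) where $n(n-k) \leq 0$. In the good region, applying the mean value theorem to $x \mapsto |x|^\alpha$ on an interval disjoint from $0$ gives $|\phi(n,k)| \geq c\max(|n|,|n-k|)^{\alpha-1}|k|$, so the choice $s_1 = 2\sigma/(\alpha-1)$ makes $\langle n\rangle^\sigma \langle n-k\rangle^\sigma \langle \phi(n,k)\rangle^{-s_1}$ bounded by $C\langle k\rangle^{-s_1}$. Cauchy-Schwarz in $n$ (using $\sum_n A_n B_{n-k} \leq \|A\|_{\ell^2}\|B\|_{\ell^2}$) and then in $k$ with weight $\langle k\rangle^{s_2}$ closes the estimate provided $\sum_{k \neq 0}\langle k\rangle^{-2(s_1+s_2)} < \infty$, i.e.\ $s_1 + s_2 > 1/2$. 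Testing this against $s_2 = 2\sigma$ yields precisely the threshold $\sigma = \frac14 - \frac1{4\alpha}$ of the statement as well as the ``$+$'' in the definition of $s_2$ below it. The main obstacle will be the bad region, where only $\langle n\rangle^\sigma\langle n-k\rangle^\sigma \leq \langle k\rangle^{2\sigma}$ is available and $\phi$ may vanish; a crude application of $|\widetilde{c}_{-k}(\tau)| \leq \|c_{-k}\|_{L^1_t}$ would force $s_2 > 2\sigma + 1/2$, which is too strong. The saving mechanism is that $|\partial_n \phi(n,k)| \sim k^{\alpha-1}$ throughout the bad region, so the $\leq |k|+1$ phase values $\{\phi(n,k)\}_{n \text{ bad}}$ are separated by $\sim k^{\alpha-1}$ inside the window $[-|k|^\alpha, |k|^\alpha]$; exploiting this quasi-equidistribution (e.g.\ via a dyadic decomposition in $\tau$ and a sampling-type $\ell^2$ argument) should yield a bound in the bad region that is also compatible with the announced $s_2$, thereby preserving the dichotomy.
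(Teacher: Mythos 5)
Your setup and good-region argument match the paper's: the reduction to a discrete bilinear sum, the bound $|\widetilde c_{-k}(\tau)|\leq\langle\tau\rangle^{-s_1}g_k$, the choice $s_1=2\sigma/(\alpha-1)$, and the threshold $s_1+s_2>1/2$ all agree. The genuine gap is in your treatment of the \emph{bad} region $n(n-k)\leq 0$: the ``quasi-equidistribution / dyadic / sampling'' argument is only sketched, and its execution is nontrivial. Moreover, the sign-based split itself is what forces you into that corner.

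The paper's decomposition is more efficient. It separates only the single diagonal $n_1=-n_2$ (the resonant case, where $\phi$ vanishes exactly and is handled trivially by $\langle n_1\rangle^{2\sigma}|\widehat c_{-2n_1}(0)|\leq\|\varphi\|_{W^{s_1,1}(\mathbb R,H^{2\sigma})}$), and then treats \emph{all} remaining pairs --- same sign or opposite sign --- at once via Lemma~\ref{13032024L1}:
\[
\big||x|^\alpha-|y|^\alpha\big|\geq\tfrac{1}{2^{\alpha-1}}(|x|+|y|)^{\alpha-1}\,\big||x|-|y|\big|,
\]
valid for \emph{every} real $x,y$, not just same-signed ones. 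Your mean value theorem bound is a special case restricted to an interval away from $0$, which is why you had to treat opposite-sign pairs separately. The correct variable is not $|k|=|n_1-n_2|$ but $\big||n_1|-|n_2|\big|$, which equals $|k|$ in your good region but equals $|2n-k|$ in your bad region --- and crucially it is $\geq 1$ whenever $n_1\neq\pm n_2$. The paper then chooses $s=2\sigma/(\alpha-1)$ to absorb $\langle n_1\rangle^\sigma\langle n_2\rangle^\sigma$ into $\langle\phi\rangle^{s}$, leaving the kernel
\[
K_{n_1,n_2}=\frac{\Gamma_{n_1-n_2}}{\langle|n_1|-|n_2|\rangle^s},
\]
to which a Schur test applies with an extra Cauchy--Schwarz weight $\langle n_1-n_2\rangle^\lambda$. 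The row/column sums converge precisely under $2(s+\lambda)>1$, because for fixed $n_1$ the map $n_2\mapsto\big||n_1|-|n_2|\big|$ has bounded multiplicity; this single argument subsumes your bad region (off the resonant diagonal), and your worry that $s_2>2\sigma+1/2$ is unavoidable there stems from using the crude $L^1$ bound on $\widetilde c_{-k}$ rather than combining the weighted bound with the phase lower estimate, exactly as you did in the good region. In short, your proposal identifies the right ingredients and the correct thresholds, but the argument is incomplete where it matters, and replacing your sign split by the resonant/non-resonant split --- together with the general form of Lemma~\ref{13032024L1} --- is what makes the proof close.
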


\begin{rk}\label{10032024R1} The functional space $W^{s_1,1}(\mathbb R, H^{s_2}(\mathbb T))$ appearing in the above statement is defined by
$$W^{s_1,1}(\mathbb R, H^{s_2}(\mathbb T)) = \big\{\varphi\in\mathscr S'(\mathbb R\times\mathbb T) : \langle D_t\rangle^{s_1}\varphi\in L^1(\mathbb R,H^{s_2}(\mathbb T))\big\},$$
and is endowed with the following norm
$$\Vert\varphi\Vert_{W^{s_1,1}(\mathbb R, H^{s_2}(\mathbb T))} = \int_{\mathbb R}\Vert\langle D_t\rangle^{s_1}\varphi(t,\cdot)\Vert_{H^{s_2}(\mathbb T)}\,\mathrm dt.$$
For every $s\in\mathbb R$, the operator $\langle D_t\rangle^s$ is defined on $\mathscr S'(\mathbb R)$ by
$$\mathscr F_t(\langle D_t\rangle^sf)(\tau)=\langle\tau\rangle^s\mathscr F_t(f)(\tau),\quad \tau\in\mathbb R,\, f\in \mathscr S'(\mathbb R),$$
where $\mathscr F_t$ denotes the Fourier transform with respect to the time variable $t\in\mathbb R$.
Moreover, let us introduce the following space
$$W^{-s_1,\infty}(\mathbb R, H^{-s_2}(\mathbb T)) = \big\{\varphi\in\mathscr S'(\mathbb R\times\mathbb T) : \langle D_t\rangle^{-s_1}\varphi\in L^{\infty}(\mathbb R,H^{-s_2}(\mathbb T))\big\},$$
equipped with the norm
$$\Vert\varphi\Vert_{W^{-s_1,\infty}(\mathbb R, H^{-s_2}(\mathbb T))} = \sup_{t\in\mathbb R}\Vert\langle D_t\rangle^{-s_1}\varphi(t,\cdot)\Vert_{H^{-s_2}(\mathbb T)}.$$
By classical arguments, we have the following duality in norm
\begin{equation}\label{13012025E1}
	\Vert\varphi\Vert_{W^{-s_1,\infty}(\mathbb R, H^{-s_2}(\mathbb T))} 
	= \sup_{\Vert\psi\Vert_{W^{s_1,1}(\mathbb R, H^{s_2}(\mathbb T))}\le1}\bigg\vert\int_{\mathbb R\times\mathbb T}\varphi(t,x)\psi(t,x)\,\mathrm dx\,\mathrm dt\bigg\vert.
\end{equation}
\end{rk}

Before proving Proposition \ref{prop:ineqtest}, let us check that Theorem \ref{thm:se} 
is a consequence of this result. Let $u_0\in L^2(\mathbb T)$ and 
$\varphi\in\mathscr S(\mathbb R\times\mathbb T)$. In order to simplify the writing, 
we set 
$$u_N(t,x) = \langle D_x\rangle^{\sigma}(e^{it\vert D_x\vert^{\alpha}}\Pi_Nu_0)(x),\quad(t,x)\in\mathbb R\times\mathbb T,$$
and also
\begin{align*}
	c_N(u) 
	 = \int_{\mathbb R}\int_{\mathbb T}(\Pi^{\perp}_0\varphi)(t,x)\vert u_N(t,x)\vert^2\,\mathrm dx\,\mathrm dt.
\end{align*}
Recall that we first aim at proving that the sequence $(c_N(u_0))_N$ converges. 
By combining the following decomposition
$$\vert z_1\vert^2 - \vert z_2\vert^2 = z_1\overline{z_1} - z_2\overline{z_2} = z_1(\overline{z_1} - \overline{z_2}) + \overline{z_2}(z_1-z_2),\quad z_1,z_2\in\mathbb C,$$
and the bilinear estimate given by Proposition \ref{prop:ineqtest}, we get that for every 
$N_1,N_2\geq0$,
\begin{align}\label{13012025E2}
	&\ \vert c_{N_1}(u_0) - c_{N_2}(u_0)\vert \\[5pt]
	\lesssim_{\sigma} &\ \Vert\varphi\Vert_{W^{s_1,1}(\mathbb R, H^{s_2}(\mathbb T))}(\Vert u_{N_1}\Vert_{H^{-\sigma}(\mathbb T)} + \Vert u_{N_2}\Vert_{H^{-\sigma}(\mathbb T)})\Vert u_{N_1} - u_{N_2}\Vert_{H^{-\sigma}(\mathbb T)} \nonumber \\[5pt]
	\lesssim_{\sigma} &\ \Vert\varphi\Vert_{W^{s_1,1}(\mathbb R, H^{s_2}(\mathbb T))}\Vert u_0\Vert_{L^2(\mathbb T)}\Vert\Pi_{N_1}u_0 - \Pi_{N_2}u_0\Vert_{L^2(\mathbb T)}
	\underset{N_1,N_2\rightarrow+\infty}{\longrightarrow}0. \nonumber
\end{align}
This proves that the sequence $(c_N(u_0))_N$ is a numerical Cauchy sequence, which therefore converges. As a consequence, the sequence \eqref{22022024E3} converges in $\mathscr S'(\mathbb R\times\mathbb T)$. 
Now, let us denote by $f_N$ the elements of the sequence \eqref{22022024E3}. It follows from the duality relation \eqref{13012025E1} and the inequality \eqref{13012025E2} that
$$\Vert f_{N_1} - f_{N_2}\Vert_{W^{-s_1,\infty}(\mathbb R, H^{-s_2}(\mathbb T))}\lesssim_{\sigma}\Vert u_0\Vert_{L^2(\mathbb T)}\Vert\Pi_{N_1}u_0 - \Pi_{N_2}u_0\Vert_{L^2(\mathbb T)}\underset{N_1,N_2\rightarrow+\infty}{\longrightarrow}0.$$
This estimate proves that the sequence \eqref{22022024E3} is a Cauchy sequence in the Banach space $W^{-s_1,\infty}(\mathbb R, H^{-s_2}(\mathbb T))$, and therefore converges in this space. The proof of Theorem \ref{thm:se} is now ended.

\subsection{Bilinear estimates} We now give the proof of Proposition \ref{prop:ineqtest}. Let us consider $\sigma>0$, $\varphi\in\mathscr S(\mathbb R\times\mathbb T)$, $u_0,v_0\in L^2(\mathbb T)$ be fixed functions, and write
$$u_0(x) = \sum_{n\in\mathbb Z}a_ne^{inx},\quad v_0(x) = \sum_{n\in\mathbb Z}b_ne^{inx},\quad x\in\mathbb T,$$
and
$$(\Pi^{\perp}_0\varphi)(t,x) = \sum_{n\in\mathbb Z\setminus\{0\}}c_n(t)e^{inx},\quad(t,x)\in\mathbb R\times\mathbb T.$$
A straightforward integration in space first shows that
$$\int_{\mathbb T}(\Pi_0^{\perp}\varphi)(t,x)(e^{it\vert D_x\vert^{\alpha}}u_0)(x)\overline{(e^{it\vert D_x\vert^{\alpha}}v_0)(x)}\,\mathrm dx 
= \sum_{\substack{n+ n_1 - n_2 = 0,\\ n\ne0}}c_n(t)a_{n_1}\overline{b_{n_2}}e^{it(\vert n_1\vert^{\alpha} - \vert n_2\vert^{\alpha})},$$
where $\mathrm dx$ is properly normalized. The integration in time then makes the Fourier transform of the coefficients $c_n$ appear as follows
$$\int_{\mathbb R}c_n(t)e^{it(\vert n_1\vert^{\alpha} - \vert n_2\vert^{\alpha})}\,\mathrm dt = \widehat{c_n}(\vert n_2\vert^{\alpha} - \vert n_1\vert^{\alpha}).$$
Thus, the term we aim at estimating is given by
$$\int_{\mathbb R}\int_{\mathbb T}(\Pi_0^{\perp}\varphi)(t,x)(e^{it\vert D_x\vert^{\alpha}}u_0)(x)\overline{(e^{it\vert D_x\vert^{\alpha}}v_0)(x)}\,\mathrm dx
= \sum_{\substack{n+ n_1 - n_2 = 0,\\ n\ne0}}\widehat{c_n}(\vert n_2\vert^{\alpha} - \vert n_1\vert^{\alpha})a_{n_1}\overline{b_{n_2}}.$$
Notice that since $n\ne 0$ in the above sum, we have $n_1\ne n_2$. There are then two cases to consider for the integers $n_1$ and $n_2$. \\[2pt]
\textbf{$\triangleright$ Resonant case.}
In the case where $n_1 = - n_2$, we get $n=-2n_1$ and Cauchy-Schwarz' inequality gives the following bound
\begin{align*}
	\sum_{\substack{n+ n_1 - n_2 = 0,\\ n\ne0,\,n_1=-n_2}}\vert\widehat{c_n}(\vert n_2\vert^{\alpha} - \vert n_1\vert^{\alpha})\vert\vert a_{n_1}\vert\vert b_{n_2}\vert
	& = \sum_{n_1\in\mathbb Z}\vert\widehat{c_{-2n_1}}(0)\vert\vert a_{n_1}\vert\vert b_{-n_1}\vert \\
	& \le\sup_{n_1\in\mathbb Z}\langle n_1\rangle^{2\sigma}\vert\widehat{c_{n_1}}(0)\vert\sum_{n_1\in\mathbb Z}\frac{\vert a_{n_1}\vert\vert b_{-n_1}\vert}{\langle 2n_1\rangle^{2\sigma}} \\[5pt]
	& \lesssim\sup_{n_1\in\mathbb Z}\langle n_1\rangle^{2\sigma}\vert\widehat{c_{n_1}}(0)\vert\Vert u_0\Vert_{H^{-\sigma}(\mathbb T)}\Vert v_0\Vert_{H^{-\sigma}(\mathbb T)}.
\end{align*}
Moreover, the above supremum can be estimated by noticing that for every $n_1\in\mathbb Z$,
\begin{multline*}
	\langle n_1\rangle^{2\sigma}\vert\widehat{c_{n_1}}(0)\vert
	\le\langle n_1\rangle^{2\sigma}\sup_{\tau\in\mathbb R}\,\langle\tau\rangle^s\vert\widehat{c_{n_1}}(\tau)\vert
	\le\int_{\mathbb R}\langle n_1\rangle^{2\sigma}\vert\langle D_t\rangle^sc_n(t)\vert\,\mathrm dt \\
	\le\int_{\mathbb R}\Vert\langle D_t\rangle^s\varphi(t,\cdot)\Vert_{H^{2\sigma}(\mathbb T)}\,\mathrm dt
	= \Vert\varphi\Vert_{W^{s,1}(\mathbb R,H^{2\sigma}(\mathbb T))},
\end{multline*}
where we set $s = 2\sigma/(\alpha-1)>0$. This value for $s>0$ will be justified later, while studying the non-resonant case. We therefore deduce that
\begin{equation}\label{09012025E2}
	\sum_{\substack{n+ n_1 - n_2 = 0,\\ n\ne0,\,n_1=-n_2}}\vert\widehat{c_n}(\vert n_2\vert^{\alpha} - \vert n_1\vert^{\alpha})\vert\vert a_{n_1}\vert\vert b_{n_2}\vert
	\lesssim\Vert\varphi\Vert_{W^{s,1}(\mathbb R,H^{2\sigma}(\mathbb T))}\Vert u_0\Vert_{H^{-\sigma}(\mathbb T)}\Vert v_0\Vert_{H^{-\sigma}(\mathbb T)}.
\end{equation}
\textbf{$\triangleright$ Non-resonant case.}
In the other situation where $n_1\ne-n_2$, we write
\begin{align}\label{07042024E2}
	\sum_{\substack{n+ n_1 - n_2 = 0,\\ n\ne0,\,n_1\ne-n_2}}\vert\widehat{c_n}(\vert n_2\vert^{\alpha} - \vert n_1\vert^{\alpha})\vert\vert a_{n_1}\vert\vert b_{n_2}\vert 
	\le \sum_{\substack{n+ n_1 - n_2 = 0,\\ n\ne0,\,n_1\ne-n_2}}\Gamma_n\,\frac{\vert a_{n_1}\vert\vert b_{n_2}\vert}{\langle\vert n_1\vert^{\alpha}-\vert n_2\vert^{\alpha}\rangle^s}, 
\end{align}
where $s = 2\sigma/(\alpha-1)>0$ is the same as before, and where we set 
\begin{equation}\label{18032024E5}
	\Gamma_n = \Vert\langle\tau\rangle^s\widehat{c_n}(\tau)\Vert_{L^{\infty}(\mathbb R)},\quad n\in\mathbb Z.
\end{equation}
In order to treat the sum in the right-hand side of the estimate \eqref{07042024E2}, we need to establish the following lemma.

\begin{lem}\label{13032024L1} For every real numbers $x,y\in\mathbb R$, we have
$$\vert\vert x\vert^{\alpha} - \vert y\vert^{\alpha}\vert\geq\frac1{2^{\alpha-1}}(\vert x\vert+\vert y\vert)^{\alpha-1}\vert\vert x\vert-\vert y\vert\vert.$$
\end{lem}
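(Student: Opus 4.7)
My plan is to reduce the claim to a one-variable inequality by exploiting the fact that both sides depend only on $\vert x\vert$ and $\vert y\vert$. Setting $a=\vert x\vert$ and $b=\vert y\vert$, I may assume without loss of generality that $a\ge b\ge 0$. Rewriting the right-hand side as $\bigl(\frac{a+b}{2}\bigr)^{\alpha-1}(a-b)$, the lemma becomes
$$a^{\alpha}-b^{\alpha}\ \ge\ \Bigl(\frac{a+b}{2}\Bigr)^{\alpha-1}(a-b),\qquad a\ge b\ge 0.$$

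The entire argument then rests on two applications of the monotonicity of the map $t\mapsto t^{\alpha-1}$ on $[0,+\infty)$, which is nondecreasing since $\alpha>1$. First, from $b^{\alpha-1}\le a^{\alpha-1}$ (valid because $b\le a$), multiplying by $b\ge 0$ yields $b^{\alpha}\le a^{\alpha-1}b$, and hence
$$a^{\alpha}-b^{\alpha}\ \ge\ a^{\alpha}-a^{\alpha-1}b\ =\ a^{\alpha-1}(a-b).$$
Second, from $\frac{a+b}{2}\le a$, the same monotonicity gives $\bigl(\frac{a+b}{2}\bigr)^{\alpha-1}\le a^{\alpha-1}$. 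Multiplying this by $a-b\ge 0$ and chaining with the previous display immediately produces the desired estimate.

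I do not anticipate any real obstacle: the proof reduces to two elementary monotonicity observations, and the degenerate case $a=b$ gives $0\ge 0$ with no further comment needed. Incidentally, the intermediate asymmetric bound $a^{\alpha}-b^{\alpha}\ge a^{\alpha-1}(a-b)$ that appears en route is already sharper than the symmetric form stated in the lemma by a factor of $2^{\alpha-1}$; nevertheless, the symmetric version $\frac{1}{2^{\alpha-1}}(\vert x\vert+\vert y\vert)^{\alpha-1}\,\bigl\vert\vert x\vert-\vert y\vert\bigr\vert$ is what will be convenient to factor out of the weight $\langle\vert n_1\vert^{\alpha}-\vert n_2\vert^{\alpha}\rangle^{s}$ appearing in the non-resonant sum.
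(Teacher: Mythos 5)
Your proof is correct, and it takes a genuinely different route from the paper. The paper's argument writes the difference of powers via the fundamental theorem of calculus along the segment from $y$ to $x$,
$$x^{\alpha}-y^{\alpha}=\int_0^1\alpha\,(tx+(1-t)y)^{\alpha-1}(x-y)\,\mathrm dt,$$
then bounds the integrand from below using $tx+(1-t)y\geq\min(t,1-t)\,(x+y)$ and computes $\int_0^1\alpha\min(t,1-t)^{\alpha-1}\,\mathrm dt=2^{1-\alpha}$. Your argument avoids calculus entirely: setting $a\geq b\geq 0$, the single inequality $b^{\alpha}\leq a^{\alpha-1}b$ gives $a^{\alpha}-b^{\alpha}\geq a^{\alpha-1}(a-b)$, and then $a\geq\frac{a+b}{2}$ finishes. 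This is simpler, and your intermediate bound $a^{\alpha}-b^{\alpha}\geq\max(a,b)^{\alpha-1}\vert a-b\vert$ is in fact pointwise stronger than the stated lemma. One small caveat on your closing remark: the improvement over the symmetric form is by a factor $\bigl(\frac{2\max(a,b)}{a+b}\bigr)^{\alpha-1}$, which lies between $1$ and $2^{\alpha-1}$ depending on $a,b$, so it is "sharper by \emph{up to} a factor of $2^{\alpha-1}$" rather than uniformly so. Worth noting for context: the paper's integral representation \eqref{24012025E2} is not just a device for this lemma; it is quoted again later to obtain the \emph{upper} bound $\vert(m-n)^{\alpha}-m^{\alpha}\vert\lesssim_{\alpha}(2m-n)^{\alpha-1}n$, so the authors get some mileage from presenting the argument in that form, whereas your more elementary proof would require a separate (equally easy) argument for that upper bound.
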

\begin{proof}
It sufficient to consider the case where $x,y\geq0$. Since $\alpha>1$, we can write
\begin{equation}\label{24012025E2}
	x^{\alpha} - y^{\alpha} = \int_0^1\frac{\mathrm d}{\mathrm dt}(tx+(1-t)y)^{\alpha}\,\mathrm dt = \int_0^1\alpha(tx+(1-t)y)^{\alpha-1}(x-y)\,\mathrm dt.
\end{equation}
We therefore get that
\begin{align*}
	\vert x^{\alpha} - y^{\alpha}\vert & = \bigg(\int_0^1\alpha(tx+(1-t)y)^{\alpha-1}\,\mathrm dt\bigg)\vert x-y\vert\\[5pt]
	& \geq\bigg(\int_0^1\alpha\min(t,1-t)^{\alpha-1}\,\mathrm dt\bigg)(x+y)^{\alpha-1}\vert x-y\vert \\[5pt]
	& = \frac1{2^{\alpha-1}}(x+y)^{\alpha-1}\vert x-y\vert.
\end{align*}
This is the expected estimate. 
\end{proof}

We then deduce from Lemma \ref{13032024L1} that when $n_1\ne n_2$, we have
$$\vert\vert n_1\vert^{\alpha} - \vert n_2\vert^{\alpha}\vert\gtrsim(\vert n_1\vert + \vert n_2\vert)^{\alpha-1}\vert\vert n_1\vert - \vert n_2\vert\vert,$$
and since $\vert\vert n_1\vert - \vert n_2\vert\vert\geq1$, we get that
\begin{align*}
	\langle\vert n_1\vert^{\alpha} - \vert n_2\vert^{\alpha}\rangle^2
	& \gtrsim\vert\vert n_1\vert^{\alpha} - \vert n_2\vert^{\alpha}\vert^2 \\[5pt]
	& \gtrsim (\vert n_1\vert + \vert n_2\vert)^{2(\alpha-1)}\vert\vert n_1\vert - \vert n_2\vert\vert^2 \\[5pt]
	& \gtrsim (1 + \vert n_1\vert^2 + \vert n_2\vert^2)^{\alpha-1}\langle\vert n_1\vert - \vert n_2\vert\rangle^2 \\[5pt]
	& \gtrsim\langle n_1\rangle^{\alpha-1}\langle n_2\rangle^{\alpha-1}\langle\vert n_1\vert - \vert n_2\vert\rangle^2.
\end{align*}
The parameter $s>0$ is chosen in order to satisfy $s(\alpha-1)/2 = \sigma$. 
We therefore deduce that the sum we are studying is bounded in the following way
$$\sum_{\substack{n+ n_1 - n_2 = 0,\\ n\ne0,\,n_1\ne-n_2}}\vert\widehat{c_n}(\vert n_2\vert^{\alpha} - \vert n_1\vert^{\alpha})\vert\vert a_{n_1}\vert\vert b_{n_2}\vert 
\le\sum_{\substack{n_1,n_2\in\mathbb Z \\ n_1\ne\pm n_2}}\frac{\Gamma_{n_1-n_2}}{\langle\vert n_1\vert - \vert n_2\vert\rangle^s}\,\frac{\vert a_{n_1}\vert}{\langle n_1\rangle^{\sigma}}\frac{\vert b_{n_2}\vert}{\langle n_2\rangle^{\sigma}}.$$
In order to alleviate the writing, let us set
$$K_{n_1,n_2} = \frac{\Gamma_{n_1-n_2}}{\langle\vert n_1\vert - \vert n_2\vert\rangle^s},\quad \tilde a_{n_1} = \frac{\vert a_{n_1}\vert}{\langle n_1\rangle^{\sigma}}\quad\text{and}\quad
\tilde b_{n_2} = \frac{\vert b_{n_2}\vert}{\langle n_2\rangle^{\sigma}}.$$
We deduce from Cauchy-Schwarz' inequality that
\begin{align*}
	\sum_{\substack{n_1,n_2\in\mathbb Z \\ n_1\ne\pm n_2}}K_{n_1,n_2}\tilde a_{n_1}\tilde b_{n_2} 
	& = \sum_{n_2\in\mathbb Z}\bigg(\sum_{\substack{n_1\in\mathbb Z \\ n_1\ne\pm n_2}}K_{n_1,n_2}\tilde a_{n_1}\bigg)\tilde b_{n_2} \\
	& \le\bigg(\sum_{n_2\in\mathbb Z}\bigg(\sum_{\substack{n_1\in\mathbb Z \\ n_1\ne\pm n_2}}K_{n_1,n_2}\tilde a_{n_1}\bigg)^2\bigg)^{1/2}\bigg(\sum_{n_2\in\mathbb Z}\tilde b_{n_2}^2\bigg)^{1/2} \\
	& = \bigg(\sum_{n_2\in\mathbb Z}\bigg(\sum_{\substack{n_1\in\mathbb Z \\ n_1\ne\pm n_2}}K_{n_1,n_2}\tilde a_{n_1}\bigg)^2\bigg)^{1/2}\Vert v_0\Vert_{H^{-\sigma}(\mathbb T)}.
\end{align*}
We need to introduce two new notations still in order to alleviate the writting
$$A_1 = \sup_{n_1\in\mathbb Z}\sum_{\substack{n_2\in\mathbb Z \\ n_2\ne\pm n_1}}K_{n_1,n_2}\quad\text{and}\quad A_2 = \sup_{n_2\in\mathbb Z}\sum_{\substack{n_1\in\mathbb Z \\ n_1\ne\pm n_2}}K_{n_1,n_2}.$$
The above double sum is also controlled with Cauchy-Schwarz' inequality as follows
\begin{align*}
	\sum_{n_2\in\mathbb Z}\bigg(\sum_{\substack{n_1\in\mathbb Z \\ n_1\ne\pm n_2}}K_{n_1,n_2}\tilde a_{n_1}\bigg)^2
	& \le\sum_{n_2\in\mathbb Z}\bigg(\sum_{\substack{n_1\in\mathbb Z \\ n_1\ne\pm n_2}}K_{n_1,n_2}\bigg)\bigg(\sum_{\substack{n_1\in\mathbb Z \\ n_1\ne\pm n_2}}K_{n_1,n_2}\tilde a_{n_1}^2\bigg) \\
	& \le A_2\sum_{n_1\in\mathbb Z}\tilde a_{n_1}^2\bigg(\sum_{\substack{n_2\in\mathbb Z \\ n_2\ne\pm n_1}}K_{n_1,n_2}\bigg) \\
	& \le A_1A_2\Vert u_0\Vert^2_{H^{-\sigma}(\mathbb T)}.
\end{align*}
It remains to study $A_1$ and $A_2$. Since these two terms are similar, we only focus on $A_1$. To that end, we consider a non-negative number $\lambda\geq0$ such that $\lambda + s>1/2$. Let $n_1\in\mathbb Z$ be fixed. We deduce once more from Cauchy-Schwarz' inequality that
\begin{multline*}
	\sum_{\substack{n_2\in\mathbb Z \\ n_2\ne\pm n_1}}K_{n_1,n_2} 
	= \sum_{\substack{n_2\in\mathbb Z \\ n_2\ne\pm n_1}}\frac{\Gamma_{n_1-n_2}}{\langle\vert n_1\vert - \vert n_2\vert\rangle^s} 
	\le\bigg(\sum_{\substack{n_2\in\mathbb Z \\ n_2\ne\pm n_1}}\langle n_1-n_2\rangle^{2\lambda}\Gamma^2_{n_1-n_2}\bigg)^{1/2} \\
	\times\bigg(\sum_{\substack{n_2\in\mathbb Z \\ n_2\ne\pm n_1}}\frac1{\langle\vert n_1\vert - \vert n_2\vert\rangle^{2s}\langle n_1-n_2\rangle^{2\lambda}}\bigg)^{1/2}.
\end{multline*}
There are now two terms to treat. On the one hand, it follows from the definition \eqref{18032024E5} of the $\Gamma_n$ and Minkowski's integral inequality that
\begin{align*}
	\bigg(\sum_{n\in\mathbb Z}\langle n\rangle^{2\lambda}\Gamma^2_n\bigg)^{1/2}
	& = \bigg(\sum_{n\in\mathbb Z}\langle n\rangle^{2\lambda}\Vert\langle\tau\rangle^s\widehat{c_n}(\tau)\Vert^2_{L^{\infty}(\mathbb R)}\bigg)^{1/2} \\
	& \le \bigg(\sum_{n\in\mathbb Z}\langle n\rangle^{2\lambda}\Vert\langle D_t\rangle^sc_n\Vert^2_{L^1(\mathbb R)}\bigg)^{1/2} \\[5pt]
	& = \Vert\langle n\rangle^{2\lambda}\langle D_t\rangle^sc_n\Vert_{l^2_nL^1_t}
	\le \Vert\langle n\rangle^{2\lambda}\langle D_t\rangle^sc_n\Vert_{L^1_tl^2_n}
	= \Vert\varphi\Vert_{W^{s,1}(\mathbb R, H^{\lambda}(\mathbb T))}.
\end{align*}
On the other hand, the second term can be bounded by noticing that $\vert\vert n_1\vert-\vert n_2\vert\vert\le\vert n_1-n_2\vert$ and writing
\begin{align*}
	\sum_{\substack{n_2\in\mathbb Z \\ n_2\ne\pm n_1}}\frac1{\langle\vert n_1\vert - \vert n_2\vert\rangle^{2s}\langle n_1-n_2\rangle^{2\lambda}}
	\le\sum_{n_2\in\mathbb Z}\frac1{\langle\vert n_1\vert - \vert n_2\vert\rangle^{2(s+\lambda)}}
	\lesssim\sum_{l\in\mathbb Z}\frac1{\langle l\rangle^{2(\lambda+s)}}.
\end{align*}
Since $\lambda+s>1/2$, the above sum is finite. We therefore deduce that the term $A_1$ is bounded as follows
$$A_1\lesssim_{\lambda,s}\Vert\varphi\Vert_{W^{s,1}(\mathbb R, H^{\lambda}(\mathbb T))}.$$
A similar estimate holds for the other term $A_2$, with a similar proof. In a nutshell, we get that
\begin{equation}\label{09012025E1}
	\sum_{\substack{n+ n_1 - n_2 = 0,\\ n\ne0,\,n_1\ne-n_2}}\vert\widehat{c_n}(\vert n_2\vert^{\alpha} - \vert n_1\vert^{\alpha})\vert\vert a_{n_1}\vert\vert b_{n_2}\vert 
	\lesssim_{s,\lambda}\Vert\varphi\Vert_{W^{s,1}(\mathbb R, H^{\lambda}(\mathbb T))}\Vert u_0\Vert_{H^{-\sigma}(\mathbb T)}\Vert v_0\Vert_{H^{-\sigma}(\mathbb T)}.
\end{equation}
\textbf{$\triangleright$ Conclusion.} Gathering the estimates \eqref{09012025E2} and \eqref{09012025E1}, we get that
\begin{align*}
	&\ \bigg\vert\int_{\mathbb R}\int_{\mathbb T}(\Pi_0^{\perp}\varphi)(t,x)(e^{it\vert D_x\vert^{\alpha}}u_0)(x)\overline{(e^{it\vert D_x\vert^{\alpha}}v_0)(x)}\,\mathrm dx\bigg\vert \\[5pt]
	\lesssim_{s,\lambda} &\ (\Vert\varphi\Vert_{W^{s,1}(\mathbb R,H^{2\sigma}(\mathbb T))} + \Vert\varphi\Vert_{W^{s,1}(\mathbb R, H^{\lambda}(\mathbb T))})\Vert u_0\Vert_{H^{-\sigma}(\mathbb T)}\Vert v_0\Vert_{H^{-\sigma}(\mathbb T)}.
\end{align*}
Recall that $\lambda\geq0$ stands for any non-negative real number satisfying $\lambda+s>1/2$. Notice that in the case where $\lambda>2\sigma$, which occurs when 
$$\frac12-s\geq2\sigma\ \Longleftrightarrow\ \sigma\le\frac14 - \frac1{4\alpha},$$
we have
$$\Vert\varphi\Vert_{W^{s,1}(\mathbb R, H^{2\sigma}(\mathbb T))}\le\Vert\varphi\Vert_{W^{s,1}(\mathbb R, H^{\lambda}(\mathbb T))}.$$
Moreover, this inequality reverses in the situation where $\lambda\le2\sigma$. The proof of Proposition \ref{prop:ineqtest} is now ended.

\subsection{Optimality} To end this section, let us discuss the optimality of the exponents $s_1$ and $s_2$ appearing in Theorem \ref{thm:se}. Precisely, in the following three situations
$$s_1<\dfrac{2\sigma}{\alpha-1}\quad  \&\quad  s_2\in\mathbb R,\quad   s_1\in\mathbb R\quad  \&\quad  s_2<2\sigma,\quad s_1 = \dfrac{2\sigma}{\alpha-1}\quad  \&\quad  s_2\le\dfrac12 - \dfrac{2\sigma}{\alpha-1},$$
we will construct a function $u_0\in L^2(\mathbb T)$ such that the associated sequence \eqref{22022024E3} does not converge in the space $W^{-s_1,\infty}(\mathbb R, H^{-s_2}(\mathbb T))$. Such functions $u_0\in L^2(\mathbb T)$ will be chosen in order to ensure that
$$\Vert f_N\Vert_{W^{-s_1,\infty}(\mathbb R, H^{-s_2}(\mathbb T))}\geq\Vert\langle D_t\rangle^{-s_1}f_N(0)\Vert_{H^{-s_2}(\mathbb T)}\underset{N\rightarrow+\infty}{\longrightarrow}+\infty,$$
where $f_N$ denotes the elements of the sequence \eqref{22022024E3}. Notice that each function $f_N$ is continuous with respect to the time variable $t\in\mathbb R$. Evaluating at $t=0$ therefore makes sense.

Let us make some preliminary computations, in order to derive a manageable formula for the norm $\Vert\langle D_t\rangle^{-s_1}f_N(0)\Vert_{H^{-s_2}(\mathbb T)}$, given a function $u_0\in L^2(\mathbb T)$. In order to simplify the writing, the Fourier coefficients of the function $u_0\in L^2(\mathbb T)$ will be denoted by
$$a_n = \widehat{u_0}(n),\quad n\in\mathbb Z.$$
Recall from \eqref{22022024E3} that the functions $f_N$ are given by
$$f_N(t,x) = \big\vert\langle D_x\rangle^{\sigma}(e^{it\vert D_x\vert^{\alpha}}\Pi_Nu_0)\big\vert^2 - \Vert\Pi_Nu_0\Vert^2_{H^{\sigma}(\mathbb T)},\quad (t,x)\in\mathbb R\times\mathbb T.$$
By first developing the squares, and then rearranging the terms while denoting $n = n_1-n_2$ and $m = n_2$, we get the following expression for the functions $f_N$
\begin{align*}
	f_N(t,x) 
	& = \sum_{\substack{\vert n_1\vert,\vert n_2\vert\le N \\ n_1\ne n_2}}\langle n_1\rangle^{\sigma}\langle n_2\rangle^{\sigma}a_{n_1}\overline{a_{n_2}}e^{it(\vert n_1\vert^{\alpha}-\vert n_2\vert^{\alpha})}e^{i(n_1-n_2)x} \\
	& = \sum_{0<\vert n\vert\le2N}e^{inx}\bigg(\sum_{\vert m\vert,\vert n+m\vert\le N}\langle n+m\rangle^{\sigma}\langle m\rangle^{\sigma}a_{n+m}\overline{a_m}\,e^{it(\vert n+m\vert^{\alpha}-\vert m\vert^{\alpha})}\bigg).
\end{align*}
We therefore deduce that
\begin{multline}\label{07022025E1}
	\Vert\langle D_t\rangle^{-s_1}f_N(t)\Vert_{H^{-s_2}(\mathbb T)}^2 \\
	= \sum_{0<\vert n\vert\le2N}\frac1{\langle n\rangle^{2s_2}}\bigg\vert\sum_{\vert m\vert,\vert n+m\vert\le N}\frac{\langle n+m\rangle^{\sigma}\langle m\rangle^{\sigma}}{\langle\vert n+m\vert^{\alpha}-\vert m\vert^{\alpha}\rangle^{s_1}}a_{n+m}\overline{a_m}\,e^{it(\vert n+m\vert^{\alpha}-\vert m\vert^{\alpha})}\bigg\vert^2,
\end{multline}
and the following formula at time $t=0$
\begin{multline}\label{24012025E1}
	\Vert\langle D_t\rangle^{-s_1}f_N(0)\Vert_{H^{-s_2}(\mathbb T)}^2 \\
	= \sum_{0<\vert n\vert\le2N}\frac1{\langle n\rangle^{2s_2}}\bigg\vert\sum_{\vert m\vert,\vert n+m\vert\le N}\frac{\langle n+m\rangle^{\sigma}\langle m\rangle^{\sigma}}{\langle\vert n+m\vert^{\alpha}-\vert m\vert^{\alpha}\rangle^{s_1}}a_{n+m}\overline{a_m}\bigg\vert^2.
\end{multline}
\textbf{$\triangleright$ Regularity in time.} Let us now focus on the optimality of the time-regularity, that is, on the optimality of the exponent $s_1$. To that end, let us consider $\varepsilon>0$ and make the following choice for the Fourier coefficients $a_n$
$$a_n = \frac1{\langle n\rangle^{1/2+\varepsilon}},\quad n\in\mathbb Z.$$
By keeping only the term $n=1$ in the first sum in \eqref{24012025E1}, and restricting the second one to positive integers $m\geq1$, we deduce that
$$\Vert\langle D_t\rangle^{-s_1}f_N(0)\Vert_{H^{-s_2}(\mathbb T)}^2\gtrsim_{s_2}\bigg\vert\sum_{1\le m\le N-1}\frac{\langle 1+m\rangle^{\sigma}\langle m\rangle^{\sigma}}{\langle\vert 1+m\vert^{\alpha}-\vert m\vert^{\alpha}\rangle^{s_1}}\frac1{\langle 1+m\rangle^{1/2+\varepsilon}}\frac1{\langle m\rangle^{1/2+\varepsilon}}\bigg\vert^2.$$
We have to control the different terms appearing in the above sum. First of all, notice that the following estimates hold
$$\langle1+m\rangle\lesssim\langle m\rangle\le\langle1+m\rangle.$$
Moreover, by using again the formula \eqref{24012025E2}, we get that
$$0\le\vert 1+m\vert^{\alpha}-\vert m\vert^{\alpha}\lesssim_{\alpha}(1+m)^{\alpha-1},$$
which leads to the following estimate
$$\langle\vert 1+m\vert^{\alpha}-\vert m\vert^{\alpha}\rangle\lesssim_{\alpha}\langle1+m\rangle^{\alpha-1}.$$
We therefore deduce that
$$\Vert\langle D_t\rangle^{-s_1}f_N(0)\Vert_{H^{-s_2}(\mathbb T)}^2\gtrsim_{\alpha,s_1,s_2}\bigg\vert\sum_{1\le m\le N-1}\frac1{\langle1+m\rangle^{s_1(\alpha-1)+1+2\varepsilon-2\sigma}}\bigg\vert^2.$$
This proves that under the condition
$$s_1(\alpha-1)+1+2\varepsilon-2\sigma\le1\quad\Longleftrightarrow\quad s_1\le\frac{2\sigma}{\alpha-1} - \frac{2\varepsilon}{\alpha-1},$$
we have
$$\lim_{N\rightarrow+\infty}\Vert\langle D_t\rangle^{-s_1}f_N(0)\Vert_{H^{-s_2}(\mathbb T)}^2 = +\infty,\quad\text{whatever $s_2\in\mathbb R$},$$
and the sequence \eqref{22022024E3} therefore does not converge in the space $W^{-s_1,\infty}(\mathbb R, H^{-s_2}(\mathbb T))$. \\[2pt]
\noindent\textbf{$\triangleright$ Regularity in space.} Let us now focus on the optimality of the exponent $s_2$. Assuming first that $s_2<2\sigma$, we make the following choice for the Fourier coefficients $a_n$
$$a_n = a_{-n} = \left\{\begin{array}{cl}
	\dfrac1{n^{(2\sigma - s_2)/2}} & \text{when $n = 2^k$ for some $k\in\mathbb N$,} \\[10pt]
	0 & \text{otherwise}.
\end{array}\right.$$
Notice that the sequence $(a_n)_n$ indeed belongs to $\ell^2(\mathbb Z)$, due to the fact that since $2\sigma-s_2>0$, we have
$$\sum_{n = 0}^{+\infty}\vert a_n\vert^2 = \sum_{k = 0}^{+\infty}\vert a_{2^k}\vert^2 = \sum_{k = 0}^{+\infty}\bigg(\frac1{2^{2\sigma-s_2}}\bigg)^k<+\infty.$$
Let us come back to the expression \eqref{24012025E1}, and keep only the even terms in the first sum in order to get
\begin{multline*}
	\Vert\langle D_t\rangle^{-s_1}f_N(0)\Vert_{H^{-s_2}(\mathbb T)}^2 \\
	\geq \sum_{0<\vert n\vert\le N}\frac1{\langle 2n\rangle^{2s_2}}\bigg\vert\sum_{\vert m\vert,\vert 2n+m\vert\le N}\frac{\langle 2n+m\rangle^{\sigma}\langle m\rangle^{\sigma}}{\langle\vert 2n+m\vert^{\alpha}-\vert m\vert^{\alpha}\rangle^{s_1}}a_{2n+m}a_m\bigg\vert^2.
\end{multline*}
The strategy is then to keep only the term $m = -n$ in the second sum
$$\Vert\langle D_t\rangle^{-s_1}f_N(0)\Vert_{H^{-s_2}(\mathbb T)}^2
\geq \sum_{0<\vert n\vert\le N}\frac{\langle n\rangle^{4\sigma}}{\langle 2n\rangle^{2s_2}}\vert a_{n}a_{-n}\vert^2
\gtrsim_{s_2} \sum_{n = 1}^N\vert n\vert^{4\sigma - 2s_2}\vert a_{n}a_{-n}\vert^2.$$
The sequence $(a_n)_n$ has been constructed in order to satisfy that
$$\sum_{n = 1}^{2^N}\vert n\vert^{4\sigma - 2s_2}\vert a_{n}a_{-n}\vert^2 = \sum_{n = 1}^{2^N}\vert n\vert^{4\sigma - 2s_2}\vert a_{n}\vert^4
 = \sum_{k=0}^N\frac{\vert 2^k\vert^{4\sigma - 2s_2}}{\vert 2^k\vert^{4\sigma - 2s_2}} = 1+N\underset{N\rightarrow+\infty}{\longrightarrow}+\infty.$$
As a consequence, the sequence \eqref{22022024E3} cannot converge in the space $W^{-s_1,\infty}(\mathbb R, H^{-s_2}(\mathbb T))$.

Finally, let us assume that $s_1 = \frac{2\sigma}{\alpha-1}$ and $2(s_1+s_2) = 1$. Let us also choose the Fourier coefficients $a_n$ being equal to $0$ when $n\le0$, $a_1 = 1$ and otherwise
$$a_n = \frac1{n^{1/2}(\log n)^{3/4}},\quad n\geq 2.$$
By using again the expression \eqref{24012025E1} for $2N$, while restricting the first sum to the indices $-N\le n\le -1$ and restricting the second one to $-2n\le m\le 2N$, we get that
\begin{align*}
	\Vert\langle D_t\rangle^{-s_1}f_{2N}(0)\Vert_{H^{-s_2}(\mathbb T)}^2
	& \geq \sum_{n=-N}^{-1}\frac1{\langle n\rangle^{2s_2}}\bigg\vert\sum_{m = -2n}^{2N}\frac{\langle n+m\rangle^{\sigma}\langle m\rangle^{\sigma}}{\langle\vert n+m\vert^{\alpha}-\vert m\vert^{\alpha}\rangle^{s_1}}a_{n+m}a_m\bigg\vert^2 \\
	& = \sum_{n=1}^{N}\frac1{\langle n\rangle^{2s_2}}\bigg\vert\sum_{m = 2n}^{2N}\frac{\langle m-n\rangle^{\sigma}\langle m\rangle^{\sigma}}{\langle\vert m-n\vert^{\alpha}-\vert m\vert^{\alpha}\rangle^{s_1}}a_{m-n}a_m\bigg\vert^2.
\end{align*}
We now have to control the terms in the above sum. On the one hand, since $0\le m-n\le m$ and that the sequence $(a_n)_{n\geq1}$ is decreasing, we get that
$$\langle m-n\rangle^{\sigma}\langle m\rangle^{\sigma}\geq\langle m-n\rangle^{2\sigma}\quad\text{and}\quad a_{m-n}a_m\geq a_m^2.$$
On the other hand, since $m\geq n$, it follows from the formula \eqref{24012025E2} that
$$\vert(m-n)^{\alpha} - m^{\alpha}\vert\lesssim_{\alpha}(m-n + m)^{\alpha-1}\vert m-n-m\vert = (2m-n)^{\alpha-1}n.$$
By using the fact that $n\geq1$, we therefore deduce that
\begin{align*}
	\langle\vert m-n\vert^{\alpha} - \vert m\vert^{\alpha}\rangle^2 & \lesssim_{\alpha} 1+(2m-n)^{2(\alpha-1)}n^2 \\[5pt]
	& \lesssim_{\alpha} (1+(2m-n)^{2(\alpha-1)})n^2 \\[5pt]
	& \lesssim_{\alpha}\langle2m-n\rangle^{2(\alpha-1)}\langle n\rangle^2.
\end{align*}
The restriction $m\geq2n$ allows to get the following bound $2m-n\le3(m-n)$, and as a consequence,
$$\langle\vert m-n\vert^{\alpha} - \vert m\vert^{\alpha}\rangle^{s_1}\lesssim_{\alpha,s_1}\langle m-n\rangle^{s_1(\alpha-1)}\langle n\rangle^{s_1}.$$
Gathering all these estimates, and recalling that  $s_1 = \frac{2\sigma}{\alpha-1}$ and $2(s_1+s_2) = 1$, we obtain
\begin{align*}
	\Vert\langle D_t\rangle^{-s_1}f_{2N}(0)\Vert_{H^{-s_2}(\mathbb T)}^2
	& \gtrsim_{\alpha,s_1}\sum_{n=1}^N\frac1{\langle n\rangle^{2(s_1+s_2)}}\bigg\vert\sum_{m = 2n}^{2N}\frac{\langle m-n\rangle^{2\sigma}}{\langle m-n\rangle^{s_1(\alpha-1)}}a_m^2\bigg\vert^2 \\[5pt]
	& \gtrsim \sum_{n=1}^N\frac1n\bigg\vert\sum_{m = 2n}^{2N}a_m^2\bigg\vert^2.
\end{align*}
The above second sum will be controlled by using a comparison with an integral, the latter being computed with the change of variable $u = \log x$,
$$\sum_{m = 2n}^{2N}a_m^2 = \sum_{m = 2n}^{2N}\frac1{m(\log m)^{3/2}}\geq\int_{2n}^{2N}\frac{\mathrm dx}{x(\log x)^{3/2}} = \frac12\bigg(\frac1{\sqrt{\log(2n)}} - \frac1{\sqrt{\log(2N)}}\bigg).$$
As a consequence,
$$\Vert\langle D_t\rangle^{-s_1}f_{2N}(0)\Vert_{H^{-s_2}(\mathbb T)}^2\gtrsim_{\alpha,s_1}\sum_{n=1}^{N}\frac1n\bigg\vert\frac1{\sqrt{\log(2n)}} - \frac1{\sqrt{\log(2N)}}\bigg\vert^2.$$
We now have three numerical series to study, coming from an expansion of the above square. First of all, we classically have
$$\sum_{n=1}^{N}\frac1n\frac1{\log(2n)}\underset{N\rightarrow+\infty}{\longrightarrow}+\infty\quad\text{and}\quad\frac1{\log(2N)}\sum_{n=1}^{N}\frac1n\underset{N\rightarrow+\infty}{\longrightarrow}1.$$
The last one is also compared to an integral as follows
$$\frac1{\sqrt{\log(2N)}}\sum_{n=2}^{N}\frac1n\frac1{\sqrt{\log(2n)}}\le\frac1{\sqrt{\log(2N)}}\int_1^N\frac{\mathrm dx}{x\sqrt{\log(2x)}} \lesssim \frac{\sqrt{\log(2N)}}{\sqrt{\log(2N)}} = 1.$$
We therefore deduce that
$$\lim_{N\rightarrow+\infty}\Vert\langle D_t\rangle^{-s_1}f_{2N}(0)\Vert_{H^{-s_2}(\mathbb T)}^2 = +\infty,$$
and the sequence \eqref{22022024E3} cannot converge in the space $W^{-s_1,\infty}(\mathbb R, H^{-s_2}(\mathbb T))$. Notice that as a consequence, the very same holds when $2(s_1+s_2) < 1$ since we have $H^{-s_2}(\mathbb T)\hookrightarrow H^{-1/2+s_1}(\mathbb T)$ in this situation.

The discussion of the optimality of the exponents $s_1$ and $s_2$ in Theorem \ref{thm:se} is now ended. Let us finally mention that by using the formula \eqref{07022025E1}, one could actually give a direct proof of Theorem \ref{thm:se}. However, it would not allow to derive the bilinear estimates stated in Proposition \ref{prop:ineqtest}, which is a stronger result being moreover key in the proof of the observability result stated in Theorem \ref{thm:fracobs}.

\section{Observability estimates}

The aim of this section is to prove Theorem \ref{thm:fracobs}. 
Let $\alpha>1$ be a positive real number and $b\in L^1(\mathbb T)\setminus\{0\}$ be a non-negative function.
The strategy consists in four steps:
\begin{enumerate}[leftmargin=* ,parsep=2pt,itemsep=0pt,topsep=2pt]
	\item[1.] Proof of the following $L^{\infty}_xL^2_t$ Strichartz estimate
	$$\big\Vert e^{it\vert D_x\vert^{\alpha}}u_0\big\Vert_{L^{\infty}(\mathbb T,L^2(0,T))}\le C_T\Vert u_0\Vert_{L^2(\mathbb T)}.$$
	\item[2.] Proof of the following weak observability estimate
	$$\Vert u_0\Vert^2_{L^2(\mathbb T)}\le C_{b,T}\bigg(\int_0^T\int_{\mathbb T}b(x)\big\vert(e^{it\vert D_x\vert^{\alpha}}u_0)(x)\big\vert^2\,\mathrm dx\,\mathrm dt + \Vert u_0\Vert^2_{H^{-\alpha}(\mathbb T)}\bigg).$$
	\item[3.] Proof of the following unique continuation result
	$$\text{\big($\sqrt be^{it\vert D_x\vert^{\alpha}}u_0 = 0$ on $(0,T)\times\mathbb T$}\big)\quad\Longrightarrow\quad\text{$u_0\equiv0$ on $\mathbb T$}.$$
	\item[4.] Removing the $H^{-\alpha}$-norm in the above weak observability estimate.
\end{enumerate}
Steps 2,3 and 4 are now very classical and had been used to study the observability properties of Schr\"odinger-type equations on tori, see \cite{Lau} for a review.
Steps 3 and 4 are known as the Bardos--Lebeau--Rauch uniqueness-compactness argument. The $L^{\infty}_xL^2_t$ Strichartz estimate presented in Step 1, crucial for dealing with the case $b\in L^2(\mathbb T)$, has been first introduced in the paper \cite{BBZ} in the non-fractional case $\alpha = 2$.

\subsection{Step 1: Strichartz type estimate} As a preliminary, let us first prove a $L^{\infty}_xL^2_t$ estimate for the solutions of the equation \eqref{eq:schro}, which is an adaptation of \cite[Proposition 2.1]{BBZ} to the fractional case.  A key feature of this estimate  is that it does not encounter derivative losses as does e.g. the $L^4_{t,x}$ Strichartz estimates for 
$e^{it\vert D_x\vert^{\alpha}}$ obtained in \cite{ST}. 
\begin{lem}\label{11032024L1} For every positive time $T>0$, there exists a positive constant $C_T>0$ such that for every initial datum $u_0\in L^2(\mathbb T)$,
$$\big\Vert e^{it\vert D_x\vert^{\alpha}}u_0\big\Vert_{L^{\infty}(\mathbb T,L^2(0,T))}\le C_T\Vert u_0\Vert_{L^2(\mathbb T)}.$$
\end{lem}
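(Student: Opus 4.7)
The plan is to expand the solution in its Fourier series, square out the $L^2_t$-norm, and reduce to a discrete kernel estimate that depends only on the frequency differences. Writing
\begin{equation*}
(e^{it\vert D_x\vert^{\alpha}}u_0)(x) = \sum_{n \in \mathbb Z} \widehat{u_0}(n)\, e^{inx}\, e^{it\vert n\vert^{\alpha}},
\end{equation*}
I would compute
\begin{equation*}
\int_0^T \bigl\vert(e^{it\vert D_x\vert^{\alpha}}u_0)(x)\bigr\vert^2\,\mathrm dt = \sum_{n,m \in \mathbb Z} \widehat{u_0}(n)\,\overline{\widehat{u_0}(m)}\, e^{i(n-m)x}\, I_{n,m},
\end{equation*}
where $I_{n,m} = \int_0^T e^{it(\vert n\vert^{\alpha} - \vert m\vert^{\alpha})}\,\mathrm dt$. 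Taking absolute values immediately eliminates the factor $e^{i(n-m)x}$, so every subsequent bound will automatically be uniform in $x \in \mathbb T$, which is exactly what an $L^{\infty}_x$ estimate requires.

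Next I would split the double sum into resonant and non-resonant pieces according to whether $\vert n\vert = \vert m\vert$ or not. The resonant contributions come from $m = n$ and $m = -n$; on both one has $I_{n,m} = T$, and a direct application of Cauchy--Schwarz yields a bound of the form $C\,T\,\Vert u_0\Vert_{L^2(\mathbb T)}^2$. The heart of the proof lies in the non-resonant part. For $\vert n\vert \ne \vert m\vert$ the time integral is bounded by
\begin{equation*}
\vert I_{n,m}\vert \le \frac{2}{\bigl\vert\vert n\vert^{\alpha} - \vert m\vert^{\alpha}\bigr\vert} \lesssim \frac{1}{(\vert n\vert+\vert m\vert)^{\alpha-1}\,\bigl\vert\vert n\vert - \vert m\vert\bigr\vert},
\end{equation*}
the second inequality being precisely Lemma \ref{13032024L1}. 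I would then interpret the matrix with entries $K(n,m) = (\vert n\vert+\vert m\vert)^{-(\alpha-1)}\bigl\vert\vert n\vert - \vert m\vert\bigr\vert^{-1}$ (extended by zero on the diagonal $\vert n\vert = \vert m\vert$) as a bounded operator on $\ell^2(\mathbb Z)$ via Schur's test, and conclude that the non-resonant contribution is $\lesssim \Vert u_0\Vert_{L^2(\mathbb T)}^2$, once again uniformly in $x$.

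The main technical step, and the only place where the hypothesis $\alpha>1$ is used, is the verification of the Schur condition $\sup_n \sum_m K(n,m) < \infty$. The plan for this is to split the inner sum at $\vert m\vert = 2\vert n\vert$: in the far regime $\vert m\vert \ge 2\vert n\vert$ the kernel is dominated by $\langle m\rangle^{-\alpha}$, whose sum converges precisely when $\alpha > 1$; in the near regime $\vert m\vert < 2\vert n\vert$, the prefactor $(\vert n\vert+\vert m\vert)^{\alpha-1}$ is bounded below by $\langle n\rangle^{\alpha-1}$, and the remaining sum $\sum_{k \ne \vert n\vert,\ \vert k\vert \le 2\vert n\vert} 1/\vert k - \vert n\vert\vert$ is of size $\log\langle n\rangle$. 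The resulting bound $\lesssim \langle n\rangle^{-(\alpha-1)} \log\langle n\rangle$ is uniformly finite in $n$, again thanks to $\alpha - 1 > 0$. The whole argument is thus essentially Schur bookkeeping; the main obstacle is ensuring that the small gain $\alpha - 1 > 0$ in Lemma \ref{13032024L1} beats the logarithmic divergence coming from the near-diagonal zone.
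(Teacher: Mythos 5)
Your proof is correct and follows the same overall structure as the paper's: Fourier expansion, resonant/non-resonant split, bound on the oscillatory time integral via Lemma~\ref{13032024L1}, and a Schur-type summation argument exploiting $\alpha>1$. The only (cosmetic) difference is in the final summation: the paper first discards the extra gain by bounding
\[
\vert\vert n_1\vert^{\alpha}-\vert n_2\vert^{\alpha}\vert \gtrsim (\vert n_1\vert+\vert n_2\vert)^{\alpha-1}\vert\vert n_1\vert-\vert n_2\vert\vert \ge \vert\vert n_1\vert-\vert n_2\vert\vert^{\alpha},
\]
which reduces everything to the clean convolution kernel $\vert l\vert^{-\alpha}$ and a one-line Cauchy--Schwarz, whereas you retain the two-factor kernel and verify Schur's test by splitting at $\vert m\vert=2\vert n\vert$ and beating a $\log\langle n\rangle$ divergence with the factor $\langle n\rangle^{-(\alpha-1)}$. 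Both are correct; the paper's reduction is slightly more economical, while your version keeps the (unneeded) extra decay and is the one you would want if a sharper weighted estimate were required.
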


\begin{proof} Setting $c_n = \widehat{u_0}(n)$, we begin by developing the $L^{\infty}_xL^2_t$ norm we aim at estimating
\begin{align*}
	\big\Vert e^{it\vert D_x\vert^{\alpha}}u_0\big\Vert^2_{L^{\infty}(\mathbb T,L^2(0,T))}
	& = \sup_{x\in\mathbb T}\int_0^T\bigg\vert\sum_{n\in\mathbb Z}c_ne^{it\vert n\vert^{\alpha}}e^{inx}\bigg\vert^2\mathrm dt \\
	& = \sup_{x\in\mathbb T}\sum_{n_1,n_2\in\mathbb Z}\bigg(\int_0^Te^{it(\vert n_1\vert^{\alpha}-\vert n_2\vert^{\alpha})}\,\mathrm dt\bigg)e^{i(n_1-n_2)x}c_{n_1}\overline{c_{n_2}}.
\end{align*}
There are then two cases to consider for the integers $n_1$ and $n_2$ in the above sum. \\[2pt]
\textbf{$\triangleright$ Resonant case.} In the situation where $n_1 = \pm n_2$, we directly get that
$$\sup_{x\in\mathbb T}\sum_{\substack{n_1,n_2\in\mathbb Z \\ n_1 = \pm n_2}}\bigg(\int_0^Te^{it(\vert n_1\vert^{\alpha}-\vert n_2\vert^{\alpha})}\,\mathrm dt\bigg)e^{i(n_1-n_2)x}c_{n_1}\overline{c_{n_2}}
\lesssim_T\sum_{n_1\in\mathbb Z}\vert c_{n_1}\vert^2 = \Vert u_0\Vert^2_{L^2(\mathbb T)}.$$
\textbf{$\triangleright$ Non-resonant case.} In the case where $n_1\ne\pm n_2$, we begin by writing that
$$\sup_{x\in\mathbb T}\sum_{\substack{n_1,n_2\in\mathbb Z \\ n_1\ne\pm n_2}}\bigg(\int_0^Te^{it(\vert n_1\vert^{\alpha}-\vert n_2\vert^{\alpha})}\,\mathrm dt\bigg)e^{i(n_1-n_2)x}c_{n_1}\overline{c_{n_2}}
\le\sum_{\substack{n_1,n_2\in\mathbb Z \\ n_1\ne\pm n_2}}\frac{\vert c_{n_1}\vert\vert c_{n_2}\vert}{\vert\vert n_1\vert^{\alpha}-\vert n_2\vert^{\alpha}\vert}.$$
By using anew Lemma \ref{13032024L1}, we obtain the following bound for the above denominator
$$\vert\vert n_1\vert^{\alpha} - \vert n_2\vert^{\alpha}\vert\gtrsim(\vert n_1\vert + \vert n_2\vert)^{\alpha-1}\vert\vert n_1\vert - \vert n_2\vert\vert\geq\vert\vert n_1\vert - \vert n_2\vert\vert^{\alpha}.$$
Cauchy-Schwarz' inequality then implies that
\begin{align*}
	\sum_{\substack{n_1,n_2\in\mathbb Z \\ n_1\ne\pm n_2}}\frac{\vert c_{n_1}\vert\vert c_{n_2}\vert}{\vert\vert n_1\vert^{\alpha}-\vert n_2\vert^{\alpha}\vert}
	& \lesssim\sum_{\substack{n_1,n_2\in\mathbb Z \\ n_1\ne\pm n_2}}\frac{\vert c_{n_1}\vert\vert c_{n_2}\vert}{\vert\vert n_1\vert - \vert n_2\vert\vert^{\alpha}} \\
	& = \sum_{\substack{n_1,n_2\geq0 \\ n_1\ne n_2}}\frac{\vert c_{n_1}\vert\vert c_{n_2}\vert+\vert c_{-n_1}\vert\vert c_{n_2}\vert+\vert c_{n_1}\vert\vert c_{-n_2}\vert+\vert c_{-n_1}\vert\vert c_{-n_2}\vert}{\vert n_1 - n_2\vert^{\alpha}} \\
	& \lesssim\bigg(\sum_{l\in\mathbb N\setminus\{0\}}\frac1{l^{\alpha}}\bigg)\Vert u_0\Vert^2_{L^2(\mathbb T)}.
\end{align*}
Notice that the above sum is finite since $\alpha>1$. This ends the proof of Lemma \ref{11032024L1}.
\end{proof}

\subsection{Step 2: estimate with error term} 
We now derive the following weak observability estimates from the smoothing estimates stated in Proposition \ref{prop:ineqtest}.

\begin{lem}\label{lem:weakobs} For every positive time $T>0$, there exists a positive constant $C_{b,T}>0$ such that for every $u_0\in L^2(\mathbb T)$,
$$\Vert u_0\Vert^2_{L^2(\mathbb T)}\le C_{b,T}\bigg(\int_0^T\int_{\mathbb T}b(x)\big\vert(e^{it\vert D_x\vert^{\alpha}}u_0)(x)\big\vert^2\,\mathrm dx\,\mathrm dt + \Vert u_0\Vert^2_{H^{-\alpha}(\mathbb T)}\bigg).$$
\end{lem}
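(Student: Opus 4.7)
The plan is to combine the bilinear smoothing estimate of Proposition~\ref{prop:ineqtest} (applied with $\sigma=\alpha$) with the Strichartz estimate of Lemma~\ref{11032024L1}, through a regularisation of $b$ that bypasses its rough regularity. Since $b\geq 0$ and $b\not\equiv 0$, its mean $\bar b=(2\pi)^{-1}\int_{\mathbb T}b$ satisfies $\bar b>0$. Fix once and for all a cut-off $\chi\in C^\infty_c((0,T))$ with $0\le \chi\le 1$ and $\int_{\mathbb R}\chi(t)\,\mathrm dt>0$. Conservation of mass $\|e^{it|D_x|^\alpha}u_0\|_{L^2(\mathbb T)}=\|u_0\|_{L^2(\mathbb T)}$, together with the decomposition $b'=\bar{b'}+\Pi_0^{\perp}b'$, gives for any $b'\in L^1(\mathbb T)$
$$\int_{\mathbb R}\int_{\mathbb T}\chi(t)b'(x)|(e^{it|D_x|^\alpha}u_0)(x)|^2\,\mathrm dx\,\mathrm dt = \bar{b'}\|u_0\|_{L^2(\mathbb T)}^2\int_{\mathbb R}\chi + \int_{\mathbb R}\int_{\mathbb T}\chi(t)\Pi_0^{\perp}b'(x)|u|^2\,\mathrm dx\,\mathrm dt.$$
The whole strategy is to use this identity with a smooth $b'$ close to $b$, to apply Proposition~\ref{prop:ineqtest} to the oscillatory term, and to control the discrepancy between $b$ and its smooth approximation by the Strichartz inequality.

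Because $b\in L^1(\mathbb T)$ may fail to belong to the Sobolev space $H^{s_2}(\mathbb T)$ required by Proposition~\ref{prop:ineqtest}, I would approximate it by $b_\delta:=b*\eta_\delta\in C^\infty(\mathbb T)$, where $\eta_\delta\ge 0$ is a smooth periodic mollifier; then $b_\delta\ge 0$, $\bar{b_\delta}=\bar b$ and $\|b-b_\delta\|_{L^1(\mathbb T)}\to 0$ as $\delta\to 0$. Plugging $b'=b_\delta$ into the identity and using $\chi\le\mathbbm 1_{[0,T]}$ with $b_\delta|u|^2\ge 0$ to majorise $\int_{\mathbb R}\int\chi b_\delta|u|^2$ by $\int_0^T\int b_\delta|u|^2$, one deduces
$$\bar b\Big(\int\chi\Big)\|u_0\|_{L^2}^2 \le \int_0^T\int_{\mathbb T}b_\delta|u|^2\,\mathrm dx\,\mathrm dt + \bigg|\int_{\mathbb R}\int_{\mathbb T}\chi(t)\Pi_0^{\perp}b_\delta(x)|u|^2\,\mathrm dx\,\mathrm dt\bigg|.$$

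The two right-hand side terms are then controlled as follows. For the oscillatory one, Proposition~\ref{prop:ineqtest} applied with $u_0=v_0$ and $\sigma=\alpha$ (so that $s_1=2\alpha/(\alpha-1)$ and $s_2=2\alpha$) to the test function $\varphi(t,x)=\chi(t)b_\delta(x)$ yields
$$\bigg|\int_{\mathbb R}\int_{\mathbb T}\chi(t)\Pi_0^{\perp}b_\delta(x)|u|^2\,\mathrm dx\,\mathrm dt\bigg| \le c_\alpha\|\chi\|_{W^{s_1,1}(\mathbb R)}\|b_\delta\|_{H^{2\alpha}(\mathbb T)}\|u_0\|_{H^{-\alpha}(\mathbb T)}^2,$$
which is finite since $b_\delta$ is smooth, even though $\|b_\delta\|_{H^{2\alpha}}$ blows up as $\delta\to 0$. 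For the first term, writing $b_\delta=b+(b_\delta-b)$ and combining Lemma~\ref{11032024L1} with the $L^1_x$--$L^\infty_x$ duality
$$\bigg|\int_0^T\int_{\mathbb T}(b_\delta-b)|u|^2\,\mathrm dx\,\mathrm dt\bigg| \le \|b-b_\delta\|_{L^1(\mathbb T)}\|e^{it|D_x|^\alpha}u_0\|_{L^\infty(\mathbb T,L^2(0,T))}^2 \le C_T^2\|b-b_\delta\|_{L^1}\|u_0\|_{L^2}^2,$$
one obtains an inequality of the form
$$\bar b\Big(\int\chi\Big)\|u_0\|_{L^2}^2 \le \int_0^T\int_{\mathbb T}b|u|^2\,\mathrm dx\,\mathrm dt + C_T^2\|b-b_\delta\|_{L^1}\|u_0\|_{L^2}^2 + c_\alpha\|\chi\|_{W^{s_1,1}}\|b_\delta\|_{H^{2\alpha}}\|u_0\|_{H^{-\alpha}}^2.$$
Choosing then $\delta=\delta(b,T)>0$ small enough that $C_T^2\|b-b_\delta\|_{L^1}\le \bar b(\int\chi)/2$ allows one to absorb the first error term on the left and yields the weak observability estimate, with $C_{b,T}$ depending on $T$, $\bar b$, $\|\chi\|_{W^{s_1,1}}$ and $\|b_\delta\|_{H^{2\alpha}}$.

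The main obstacle is precisely the mismatch between the rough regularity $b\in L^1(\mathbb T)$ and the positive Sobolev regularity needed by Proposition~\ref{prop:ineqtest}; resolving it forces both a regularisation of $b$ and the control of the resulting $L^1$-error by the $L^\infty_x L^2_t$ Strichartz estimate of Lemma~\ref{11032024L1}, which explains why that estimate (proved in Step~1) is indispensable even though it does not itself appear in the smoothing statement. The choice $\sigma=\alpha$ is dictated by the target error norm $\|u_0\|_{H^{-\alpha}(\mathbb T)}^2$ appearing in the statement.
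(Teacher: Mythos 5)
Your proof is correct, but it takes a genuinely different route from the paper. The paper argues by contradiction: it assumes a normalised sequence $(u_{n,0})_n$ with vanishing observability integral and vanishing $H^{-\alpha}$-norm, and it reaches the absurdity $\int_0^T\psi(t)\,\mathrm dt=0$ by first showing (Step 2.1) that $\int\!\!\int\psi(t)(\tau_{x_0}b)(x)\vert u_n\vert^2\to 0$ for every translate $\tau_{x_0}b$, and then (Step 2.2, Lemma~\ref{11032024L2}) invoking a von Neumann ergodic argument to approximate the constant function $1$ in $L^1(\mathbb T)$ by convex combinations of translates $\tau_{x_0}b$, using the hypothesis $\widehat b(0)\ne 0$. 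Your argument replaces this entire indirect mechanism by a single direct quantitative inequality: since the only obstruction in Proposition~\ref{prop:ineqtest} is the zero Fourier mode, you split $b_\delta=\bar b+\Pi_0^\perp b_\delta$ after mollification (which preserves positivity and the mean), recover $\bar b(\int\chi)\Vert u_0\Vert_{L^2}^2$ from the mean part via conservation of mass, bound the oscillatory part $\chi\otimes\Pi_0^\perp b_\delta$ by Proposition~\ref{prop:ineqtest} with $\sigma=\alpha$, and absorb the $L^1$-mollification error via Lemma~\ref{11032024L1}. The two main tools (the bilinear smoothing estimate and the $L^\infty_xL^2_t$ Strichartz estimate) are the same in both proofs, but you avoid the compactness/contradiction scheme and Lemma~\ref{11032024L2} entirely, and you obtain an explicit constant $C_{b,T}$ in terms of $\bar b$, $\int\chi$, $\Vert\chi\Vert_{W^{s_1,1}}$ and $\Vert b_\delta\Vert_{H^{2\alpha}}$. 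This is a simpler and more transparent argument for this particular one-dimensional lemma; the paper's more involved ergodic strategy is presumably chosen with an eye towards the higher-dimensional generalisations the authors mention, where a direct mean-plus-oscillation split may be harder to exploit.

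One small stylistic caveat: you should make explicit that the final constant $C_{b,T}$ depends on $b$ only through $\bar b$ and the fixed choice of $\delta=\delta(b,T)$ (hence on $\Vert b_\delta\Vert_{H^{2\alpha}}$), and that the integral $\int_0^T\!\!\int_{\mathbb T}b\vert e^{it\vert D_x\vert^\alpha}u_0\vert^2$ is finite for $b\in L^1$ precisely thanks to Lemma~\ref{11032024L1}, so that the manipulation $b_\delta=b+(b_\delta-b)$ inside the integral is legitimate. These are implicit in your write-up but worth stating.
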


\begin{proof} Arguing by contradiction, we consider a sequence $(u_{n,0})_n$ in $L^2(\mathbb T)$ such that $\Vert u_{n,0}\Vert_{L^2(\mathbb T)} = 1$ and satisfying
\begin{equation}\label{18022024E1}
	\int_0^T\int_{\mathbb T}b(x)\big\vert(e^{it\vert D_x\vert^{\alpha}}u_{n,0})(x)\big\vert^2\,\mathrm dx\,\mathrm dt + \Vert u_{n,0}\Vert^2_{H^{-\alpha}(\mathbb T)}\le\frac1n.
\end{equation}
Setting $u_n(t) = e^{it\vert D_x\vert^{\alpha}}u_{n,0}$, 
we aim at proving that the sequence $(u_n)_n$ 
satisfies that for all $\psi\in C^{\infty}_0(0,T)$
\begin{equation}\label{18022024E2}
	\int_0^T\psi(t)\Vert u_n(t)\Vert^2_{L^2(\mathbb T)}\,\mathrm dt\underset{n\rightarrow+\infty}{\longrightarrow}0.
\end{equation}
This would end the proof of Lemma \ref{lem:weakobs}. Indeed, the conservation of mass of the equation \eqref{eq:schro} would then imply that
$$\int_0^T\psi(t)\Vert u_n(t)\Vert^2_{L^2(\mathbb T)}\,\mathrm dt = \int_0^T\psi(t)\Vert u_{n,0}\Vert^2_{L^2(\mathbb T)}\,\mathrm dt = \int_0^T\psi(t)\,\mathrm dt\underset{n\rightarrow+\infty}{\longrightarrow}0.$$
This convergence of course cannot hold for all functions $\psi\in C^{\infty}_0(0,T)$. \\[2pt]
\textbf{$\triangleright$ Step 2.1.} First of all, let us prove that for every $x_0\in\mathbb T$,
\begin{equation}\label{11032024E3}
	\int_0^T\int_{\mathbb T}\psi(t)(\tau_{x_0}b)(x)\vert u_n(t,x)\vert^2\,\mathrm dx\,\mathrm dt\underset{n\rightarrow+\infty}{\longrightarrow}0,
\end{equation}
where the translation operator $\tau_{x_0}$ is defined for every function in $f\in L^1(\mathbb T)$ by
$$(\tau_{x_0}f)(x) = f(x-x_0),\quad x\in\mathbb T.$$
To that end, we will widely use the fact that for every smooth function $\varphi\in C^{\infty}(\mathbb T)$, we get from the smoothing estimates stated in Proposition \ref{prop:ineqtest} applied with $\sigma=\alpha$, $u_0 = v_0 = u_{n,0}$ and the tensorized test function $\psi\otimes\varphi$ that
\begin{equation}\label{11032024E2}
	\bigg\vert\int_0^T\int_{\mathbb T}\psi(t)(\Pi_0^{\perp}\varphi)(x)\vert u_n(t,x)\vert^2\,\mathrm dx\,\mathrm dt\bigg\vert
	\lesssim_{\psi,\varphi}\Vert u_{n,0}\Vert^2_{H^{-\alpha}(\mathbb T)}\underset{n\rightarrow+\infty}{\longrightarrow}0\quad\text{by \eqref{18022024E1}}.
\end{equation}
Let us consider a sequence $(b_j)_j$ in $C^{\infty}(\mathbb T)$ that converges to the function $b$ in $L^1(\mathbb T)$, and write
\begin{multline*}
	\bigg\vert\int_0^T\int_{\mathbb T}\psi(t)(b-\tau_{x_0}b)(x)\vert u_n(t,x)\vert^2\,\mathrm dx\,\mathrm dt\bigg\vert 
	\le\bigg\vert\int_0^T\int_{\mathbb T}\psi(t)(b_j-\tau_{x_0}b_j)(x)\vert u_n(t,x)\vert^2\,\mathrm dx\,\mathrm dt\bigg\vert \\
	+ \bigg\vert\int_0^T\int_{\mathbb T}\psi(t)(b-b_j-\tau_{x_0}(b-b_j))(x)\vert u_n(t,x)\vert^2\,\mathrm dx\,\mathrm dt\bigg\vert.
\end{multline*}
There are now two terms to treat. On the one hand, we deduce from the convergence \eqref{11032024E2} with the functions $\varphi = b_j - \tau_{x_0}b_j$, which satisfy $\Pi^{\perp}_0\varphi = \varphi$, that the following convergence holds for every integer $j\geq0$
$$\int_0^T\int_{\mathbb T}\psi(t)(b_j-\tau_{x_0}b_j)(x)\vert u_n(t,x)\vert^2\,\mathrm dx\,\mathrm dt\underset{n\rightarrow+\infty}{\longrightarrow}0.$$
On the other hand, we deduce from the $L^{\infty}_xL^2_t$ Strichartz estimate stated in Lemma \ref{11032024L1} that
$$\bigg\vert\int_0^T\int_{\mathbb T}\psi(t)(b-b_j-\tau_{x_0}(b-b_j))(x)\vert u_n(t,x)\vert^2\,\mathrm dx\,\mathrm dt\bigg\vert\lesssim_{\psi}\Vert b - b_j\Vert_{L^1(\mathbb T)},$$
since the functions $u_{n,0}$ are normalized in $L^2(\mathbb T)$. We therefore deduce that
$$\limsup_{n\rightarrow+\infty}\bigg\vert\int_0^T\int_{\mathbb T}\psi(t)(b-\tau_{x_0}b)(x)\vert u_n(t,x)\vert^2\,\mathrm dx\,\mathrm dt\bigg\vert
\lesssim\Vert b - b_j\Vert_{L^1(\mathbb T)}\underset{j\rightarrow+\infty}{\longrightarrow}0.$$
Recalling from \eqref{18022024E1} that we also have
$$\int_0^T\int_{\mathbb T}\psi(t)b(x)\vert u_n(t,x)\vert^2\,\mathrm dx\,\mathrm dt\underset{n\rightarrow+\infty}{\longrightarrow}0,$$
this proves that the convergence \eqref{11032024E3} actually holds. Notice that in the very particular case where $b=\mathbbm1_{\omega}$ and $\omega\subset\mathbb T$ is an open set, we directly conclude to the convergence \eqref{18022024E2} by covering the circle $\mathbb T$ with a finite number of open sets of the form $\tau_{x_0}\omega$ with $x_0\in\mathbb T$. The $L^{\infty}_xL^2_t$ Strichartz estimate is in fact not necessary in this case, since it suffices to apply the convergence \eqref{11032024E2} with the functions $\varphi = f - \tau_{x_0}f$, where $f\in C^{\infty}(\mathbb T)$ is a smooth function such that $\supp f\subset\omega$. \\[2pt]
\textbf{$\triangleright$ Step 2.2.} In order to derive the convergence \eqref{18022024E2} from \eqref{11032024E3} when $b\in L^1(\mathbb T)$, we need to establish the following approximation result.

\begin{lem}\label{11032024L2} For every function $f\in L^1(\mathbb T)$, we have
$$1\in\overline{\vect\{\tau_{x_0}f : x_0\in\mathbb T\}}\quad\Longleftrightarrow\quad \widehat f(0)\ne0,$$
the closure being taken with respect to the $L^1(\mathbb T)$ topology.
\end{lem}

\begin{proof} The necessary part of this statement follows from the fact that 
$$\forall x_0\in\mathbb T,\quad\widehat{\tau_{x_0}f}(0) = \int_{\mathbb T}(\tau_{x_0}f)(x)\,\mathrm dx = \int_{\mathbb T}f(x)\,\mathrm dx = \widehat f(0).$$
Concerning the sufficient part, we can assume without lost of generality that $\widehat f(0) = 1$. Let us consider $\alpha\in\mathbb R\setminus2\pi\mathbb Q$. In order to simplify the writing, we will use the following notation for every positive integer $n\geq1$ and $g\in L^1(\mathbb T)$
$$S_n(g) = \frac1n\sum_{k=0}^{n-1}\tau_{k\alpha}g\in\vect\{\tau_{x_0}g : x_0\in\mathbb T\}.$$
We consider a sequence $(f_j)_j$ in $L^2(\mathbb T)$ that converges to the function $f$ in $L^1(\mathbb T)$, and we write
\begin{align*}
	\Vert1-S_n(f)\Vert_{L^1(\mathbb T)}
	&  \le \Vert1-S_n(f_j)\Vert_{L^1(\mathbb T)} + \Vert S_n(f_j)-S_n(f)\Vert_{L^1(\mathbb T)} \\[5pt]
	& \le \Vert1-S_n(f_j)\Vert_{L^2(\mathbb T)} + \frac1n\sum_{k=0}^{n-1}\Vert\tau_{k\alpha}f_j - \tau_{k\alpha}f\Vert_{L^1(\mathbb T)} \\[5pt]
	& \le \vert 1-\widehat{f_j}(0)\vert +  \Vert\widehat{f_j}(0)-S_n(f_j)\Vert_{L^2(\mathbb T)} + \Vert f_j - f\Vert_{L^1(\mathbb T)}.
\end{align*}
Moreover, a classical consequence of von Neumann's ergodic theorem \cite[Theorem 5.1]{SS} implies that for every integer $j\geq0$,
$$\Vert\widehat{f_j}(0)-S_n(f_j)\Vert_{L^2(\mathbb T)}\underset{n\rightarrow+\infty}{\longrightarrow}0.$$
This implies that
$$\limsup_{n\rightarrow+\infty}\Vert1-S_n(f)\Vert_{L^1(\mathbb T)}
\le \vert 1-\widehat{f_j}(0)\vert + \Vert f_j - f\Vert_{L^1(\mathbb T)}
\le2\Vert f_j - f\Vert_{L^1(\mathbb T)}\underset{j\rightarrow+\infty}{\longrightarrow}0.$$
The proof of Lemma \ref{11032024L2} is therefore ended. It should be noted, however, that in the particular case where $f\in L^2(\mathbb T)$, the above ergodic argument can be simplified to a projection argument. Indeed, let us introduce the following notation
$$F = \overline{\vect\{\tau_{x_0}f : x_0\in\mathbb T\}}\subset L^2(\mathbb T).$$
Let $\varphi\in F$ be the orthogonal projection of the constant function $1$ onto the closed vector space $F$. Since $1-\varphi\in F^{\perp}$, we get in particular that
$$\forall x_0\in\mathbb T,\quad 0 = \langle1-\varphi,\tau_{x_0}f\rangle_{L^2(\mathbb T)} = \sum_{n\in\mathbb Z}\widehat{1-\varphi}(n)\widehat f(n)e^{inx_0}.$$
This formula shows that the function $x_0\mapsto\langle1-\varphi,\tau_{x_0}f\rangle_{L^2(\mathbb T)}$ belongs to the space $C(\mathbb T)$. We therefore deduce that
$$\forall n\in\mathbb Z,\quad\widehat{1-\varphi}(n)\widehat f(n) = 0.$$
Since $\widehat f(0)\ne0$, we get that $\widehat\varphi(0) =1$. Moreover, $\Vert\varphi\Vert_{L^2(\mathbb T)}\le 1$, and we conclude that $\varphi = 1$, implying that $1\in F$ as expected. We were not able then to pass directly to the case where $f\in L^1(\mathbb T)$ with an approximation argument. This is the reason why we made use of von Neumann's ergodic theorem.
\end{proof}

We can now prove \eqref{18022024E2}. Since the assumptions on the function $b$ imply that $\widehat b(0)\ne0$, we deduce from Lemma \ref{11032024L2} that there exists a sequence $(b_j)_j$ with
$$b_j\in\vect\{\tau_{x_0}b : x_0\in\mathbb T\},$$
satisfying that
$$b_j\underset{j\rightarrow+\infty}{\longrightarrow}1\quad\text{in $L^1(\mathbb T)$}.$$
Let us now write
\begin{multline*}
	\bigg\vert\int_0^T\int_{\mathbb T}\psi(t)\vert u_n(t,x)\vert^2\,\mathrm dx\,\mathrm dt\bigg\vert 
	\le\bigg\vert\int_0^T\int_{\mathbb T}\psi(t)(1-b_j)(x)\vert u_n(t,x)\vert^2\,\mathrm dx\,\mathrm dt\bigg\vert \\
	+ \bigg\vert\int_0^T\int_{\mathbb T}\psi(t)b_j(x)\vert u_n(t,x)\vert^2\,\mathrm dx\,\mathrm dt\bigg\vert.
\end{multline*}
The first term can be treated as follows by using anew Lemma \ref{11032024L1}
$$\bigg\vert\int_0^T\int_{\mathbb T}\psi(t)(1-b_j)(x)\vert u_n(t,x)\vert^2\,\mathrm dx\,\mathrm dt\bigg\vert\lesssim_{\psi}\Vert 1-b_j\Vert_{L^1(\mathbb T)}.$$
Moreover, as a consequence of \eqref{11032024E3}, we get that for every $j\geq0$,
$$\int_0^T\int_{\mathbb T}\psi(t)b_j(x)\vert u_n(t,x)\vert^2\,\mathrm dx\,\mathrm dt\underset{n\rightarrow+\infty}{\longrightarrow}0.$$
We therefore deduce that
$$\limsup_{n\rightarrow+\infty}\bigg\vert\int_0^T\int_{\mathbb T}\psi(t)\vert u_n(t,x)\vert^2\,\mathrm dx\,\mathrm dt\bigg\vert
\lesssim\Vert 1-b_j\Vert_{L^1(\mathbb T)}\underset{j\rightarrow+\infty}{\longrightarrow}0.$$
This ends the proof of Lemma \ref{lem:weakobs}.
\end{proof}

\subsection{Step 3: unique continuation}\label{uc} Let us now consider the following vector space
$$\mathcal N_{b,T} = \big\{u_0\in L^2(\mathbb T) : \text{$\sqrt be^{it\vert D_x\vert^{\alpha}}u_0 = 0$ on $(0,T)\times\mathbb T$}\big\}.$$
In this subsection, we prove the following unique continuation result.

\begin{lem}\label{18022024E4} For every positive time $T>0$, we have
$$\mathcal N_{b,T} = \{0\}.$$
\end{lem}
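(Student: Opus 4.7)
I would follow the Bardos--Lebeau--Rauch uniqueness-compactness argument, reducing the claim to the fact that no nonzero trigonometric polynomial can vanish on the positive-measure set $\{b>0\}$.

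First I would show that $\mathcal N_{b,T}$ is finite-dimensional. It is closed in $L^2(\mathbb T)$, being the kernel of the continuous linear map $u_0 \mapsto \sqrt b\, e^{it|D_x|^\alpha}u_0$ from $L^2(\mathbb T)$ into $L^2((0,T)\times\mathbb T)$ (continuity coming from Lemma \ref{11032024L1} together with $\sqrt b \in L^2(\mathbb T)$). For any $u_0 \in \mathcal N_{b,T}$, the weak observability estimate of Lemma \ref{lem:weakobs} with vanishing observation term reduces to
$$\Vert u_0\Vert_{L^2(\mathbb T)}^2 \le C_{b,T}\Vert u_0\Vert_{H^{-\alpha}(\mathbb T)}^2,$$
so the $L^2(\mathbb T)$ and $H^{-\alpha}(\mathbb T)$ norms are equivalent on $\mathcal N_{b,T}$. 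Combined with the compact embedding $L^2(\mathbb T) \hookrightarrow H^{-\alpha}(\mathbb T)$, F.~Riesz's theorem forces $\mathcal N_{b,T}$ to have finite dimension.

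Next I would extract a subspace invariant under the Schr\"odinger group. The same argument gives that $\mathcal N_{b,T'}$ is finite-dimensional for every $T'>0$, and since $\mathcal N_{b,T_1} \subset \mathcal N_{b,T_2}$ whenever $T_2 \le T_1$, the integer-valued function $T' \mapsto \dim \mathcal N_{b,T'}$ is non-increasing, hence locally constant outside an at most countable set. Fixing $T^* \in (0,T]$ at a point of local constancy, one has $\mathcal N_{b,T^*} = \mathcal N_{b,T^*-s}$ for every sufficiently small $s>0$, since $\mathcal N_{b,T^*} \subset \mathcal N_{b,T^*-s}$ and both spaces share the same dimension. A direct verification shows $e^{is|D_x|^\alpha}\mathcal N_{b,T^*} \subset \mathcal N_{b,T^*-s}$; combined with unitarity of $e^{is|D_x|^\alpha}$, this gives $e^{is|D_x|^\alpha}\mathcal N_{b,T^*} = \mathcal N_{b,T^*}$ for small $s>0$, and the group law extends invariance to every $s \in \mathbb R$.

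Finally, since $\mathcal N_{b,T^*}$ is a finite-dimensional invariant subspace of the strongly continuous unitary group $(e^{is|D_x|^\alpha})_s$, its generator $|D_x|^\alpha$ restricts to a self-adjoint operator on it. If $\mathcal N_{b,T^*} \neq \{0\}$, the spectral theorem in finite dimension exhibits an eigenvector $w \neq 0$ of $|D_x|^\alpha$ in $\mathcal N_{b,T^*}$; the eigenvalue must equal $|n|^\alpha$ for some $n \in \mathbb N$, forcing $w$ to be a nonzero trigonometric polynomial in $\Span(e^{inx},e^{-inx})$. Taking $u_0 = w$ in the definition of $\mathcal N_{b,T^*}$ at any fixed $t \in (0, T^*)$ and using $e^{it|D_x|^\alpha}w = e^{it|n|^\alpha}w$ yields $\sqrt b(x)\,w(x) = 0$ almost everywhere on $\mathbb T$, so $w$ vanishes on the positive-measure set $\{b>0\}$, contradicting the fact that a nonzero trigonometric polynomial has only finitely many zeros on $\mathbb T$. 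Hence $\mathcal N_{b,T^*} = \{0\}$, and the inclusion $\mathcal N_{b,T} \subset \mathcal N_{b,T^*}$ concludes. The key technical step is the production of the invariant subspace, which leverages the monotonicity of $T' \mapsto \dim \mathcal N_{b,T'}$ and the unitarity of the Schr\"odinger group.
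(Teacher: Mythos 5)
Your proof is correct and lands on the same contradiction as the paper's --- a nonzero trigonometric eigenfunction of $\vert D_x\vert^{\alpha}$ cannot vanish on the positive-measure set $\{b>0\}$ --- but you reach the finite-dimensional invariant subspace by a genuinely different route. The paper first proves that $\mathcal N_{b,T}$ is stable under $\vert D_x\vert^{\alpha}$ with the quantitative bound $\Vert\vert D_x\vert^{\alpha}u_0\Vert_{L^2}\le c\Vert u_0\Vert_{L^2}$, via a difference-quotient argument: the quotients $u_\varepsilon=\varepsilon^{-1}(e^{i\varepsilon\vert D_x\vert^{\alpha}}u_0-u_0)$ lie in $\mathcal N_{b,T/2}$, Lemma~\ref{lem:weakobs} is applied to them, and a somewhat delicate limiting argument (dominated convergence for the $H^{-\alpha}$-norm, weak $L^2$-compactness, strong $L^2$-convergence, then pointwise a.e.\ convergence to keep the limit inside $\mathcal N_{b,T}$) sends $\varepsilon\to 0^+$; finite-dimensionality then follows from the resulting $H^{\alpha}$-bound and Rellich. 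You instead establish finite-dimensionality directly, from the norm equivalence $\Vert u_0\Vert_{L^2}\lesssim\Vert u_0\Vert_{H^{-\alpha}}$ on $\mathcal N_{b,T}$ (Lemma~\ref{lem:weakobs} with vanishing observation) together with compactness of $L^2\hookrightarrow H^{-\alpha}$ --- the dual of the paper's compactness argument, and one which bypasses any regularity gain for elements of $\mathcal N_{b,T}$. You then manufacture invariance by a dimension-counting device: picking $T^*$ where the non-increasing integer-valued map $T'\mapsto\dim\mathcal N_{b,T'}$ is left-continuous gives $\mathcal N_{b,T^*}=\mathcal N_{b,T^*-s}$ for small $s>0$, and unitarity promotes the easy inclusion $e^{is\vert D_x\vert^{\alpha}}\mathcal N_{b,T^*}\subset\mathcal N_{b,T^*-s}$ to equality, hence invariance under the group and therefore under its generator. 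Your route trades the paper's $\varepsilon\to 0^+$ limit for a descent to a possibly smaller time $T^*\le T$ and the abstract Stone-theorem step; both proofs otherwise share the same skeleton (Lemma~\ref{lem:weakobs}, Rellich/Riesz, eigenvector contradiction). One cosmetic improvement on your side: you explicitly check that $\mathcal N_{b,T}$ is closed (via Lemma~\ref{11032024L1}), a point the paper's Riesz argument uses implicitly; and your phrasing ``nonzero trigonometric polynomial'' correctly covers the degenerate cases $c_1c_2=0$ that the paper's ``$c_1c_2\ne 0$'' inadvertently excludes.
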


\begin{proof} Let us begin by checking that the vector space $\mathcal N_{b,T}$ is invariant under the action of the operator $\vert D_x\vert^{\alpha}$, with moreover
\begin{equation}\label{18022024E5}
	\exists c>0, \forall u_0\in\mathcal N_{b,T},\quad \Vert\vert D_x\vert^{\alpha}u_0\Vert_{L^2(\mathbb T)}\le c\Vert u_0\Vert_{L^2(\mathbb T)}.
\end{equation}
Let $u_0\in\mathcal N_{b,T}$ and $\varepsilon\in(0,T)$. We also consider the function $u_{\varepsilon} = \varepsilon^{-1}(e^{i\varepsilon\vert D_x\vert^{\alpha}}u_0-u_0)$. Since $u_{\varepsilon}\in\mathcal N_{b,T/2}$ when $0<\varepsilon<T/2$, we deduce from Lemma \ref{lem:weakobs} applied at time $T/2$ that
\begin{equation}\label{22022024E2}
	\Vert u_{\varepsilon}\Vert_{L^2(\mathbb T)}\le C_{b,T/2}\Vert u_{\varepsilon}\Vert_{H^{-\alpha}(\mathbb T)}.
\end{equation}
Moreover, Plancherel's theorem and the dominated convergence theorem imply that 
$$\Vert u_{\varepsilon}\Vert_{H^{-\alpha}(\mathbb T)}\rightarrow\Vert\vert D_x\vert^{\alpha}\langle D_x\rangle^{-\alpha}u_0\Vert_{L^2(\mathbb T)}\le\Vert u_0\Vert_{L^2(\mathbb T)}\quad \text{as $\varepsilon\rightarrow0^+$}.$$ 
Indeed, on the one hand, the following convergence holds for every $n\in\mathbb Z$,
$$\langle n\rangle^{-\alpha}\vert \widehat{u_{\varepsilon}}(n)\vert 
= \bigg\vert\frac{e^{i\varepsilon\vert n\vert^{\alpha}}\widehat{u_0}(n)-\widehat{u_0}(n)}{\varepsilon\langle n\rangle^{\alpha}}\bigg\vert
= \frac{\vert n\vert^{\alpha}}{\langle n\rangle^{\alpha}}\bigg\vert\frac{e^{i\varepsilon\vert n\vert^{\alpha}}-1}{\varepsilon\vert n\vert^{\alpha}}\bigg\vert\vert\widehat{u_0}(n)\vert
\underset{\varepsilon\rightarrow0^+}{\longrightarrow}\frac{\vert n\vert^{\alpha}}{\langle n\rangle^{\alpha}}\vert\widehat{u_0}(n)\vert.$$
On the other hand, we have the following domination from the mean value theorem for every $\varepsilon>0$ and $n\in\mathbb Z$,
$$\langle n\rangle^{-\alpha}\vert \widehat{u_{\varepsilon}}(n)\vert\le\vert\widehat{u_0}(n)\vert.$$
In particular, the sequence $(u_{\varepsilon})_{\varepsilon}$ is bounded in $H^{-\alpha}(\mathbb T)$. This fact combined with the estimate \eqref{22022024E2} implies that the sequence $(u_{\varepsilon})_{\varepsilon}$ is also bounded in $L^2(\mathbb T)$ and therefore weakly converges in this space up to extracting. Since the sequence $(u_{\varepsilon})_{\varepsilon}$ also converges to $i\vert D_x\vert^{\alpha}u_0$ in $\mathscr S'(\mathbb T)$, the aforementionned weak limit is nothing but the function $i\vert D_x\vert^{\alpha}u_0$, which therefore belongs to $L^2(\mathbb T)$. Plancherel's theorem and the dominated convergence theorem then implies that 
$$\Vert u_{\varepsilon}\Vert_{L^2(\mathbb T)}\rightarrow\Vert\vert D_x\vert^{\alpha}u_0\Vert_{L^2(\mathbb T)}\quad\text{as $\varepsilon\rightarrow0^+$}.$$
As a consequence, the sequence $(u_{\varepsilon})_{\varepsilon}$ converges strongly to the function $i\vert D_x\vert^{\alpha}u_0$ in $L^2(\mathbb T)$. The expected estimate \eqref{18022024E5} is then obtained by passing to the limit as $\varepsilon\rightarrow0^+$ in \eqref{22022024E2}. Moreover, since $u_{\varepsilon}\in\mathcal N_{b,T-\delta}$ when $0<\varepsilon<\delta$ and that the strong convergence in $L^2(\mathbb T)$ implies almost everywhere pointwize convergence up to extraction, we deduce that $\vert D_x\vert^{\alpha}u_0\in\mathcal N_{b,T-\delta}$ for every $\delta\in(0,T)$. This implies that the function $\vert D_x\vert^{\alpha}u_0$ also belongs to $\mathcal N_{b,T}$. Hence, the vector space $\mathcal N_{b,T}$ is invariant under the action of the linear operator $\vert D_x\vert^{\alpha}$.

Notice that the estimate \eqref{18022024E5} implies that the unit ball of $\mathcal N_{b,T}$ is bounded in $H^{\alpha}(\mathbb T)$, and is therefore relatively compact by Rellich's theorem. This implies that the vector space $\mathcal N_{b,T}$ is finite-dimensional by Riesz' theorem. As a consequence, since the linear operator $\vert D_x\vert^{\alpha}:\mathcal N_{b,T}\rightarrow\mathcal N_{b,T}$ is bounded and in the situation where $\mathcal N_{b,T}\ne\{0\}$, there exists $\lambda\in\mathbb C$ and $u\in\mathcal N_{b,T}\setminus\{0\}$ such that $\vert D_x\vert^{\alpha}u = \lambda u$. The function $u$ not being identically equal to zero on $\mathbb T$, we deduce by passing to the Fourier side that there exists $n\in\mathbb Z$ such that $\lambda = \vert n\vert^{\alpha}$, and that the function $u$ is actually given by
$$u(x) = c_1e^{inx} + c_2e^{-inx},\quad x\in\mathbb T,$$
with $c_1,c_2\in\mathbb C$ such that $c_1c_2\ne0$. Moreover, since $u$ is an eigenfunction of the operator $\vert D_x\vert^{\alpha}$ and $u\in\mathcal N_{b,T}$, we get that the function $u$ vanishes on $\{b>0\}$, a measurable set with positive measure, which is absurd. As a consequence, we have $\mathcal N_{b,T} = \{0\}$ as expected.
\end{proof}

\subsection{Step 4: removing the $H^{-\alpha}$-norm} We are now in position to end the proof of Theorem \ref{thm:fracobs}. We argue once again by contradiction by considering a sequence $(u_{n,0})_n$ in $L^2(\mathbb T)$ such that $\Vert u_{n,0}\Vert_{L^2(\mathbb T)} = 1$ and satisfying
\begin{equation}\label{18022024E3}
	\int_0^T\int_{\mathbb T}b(x)\big\vert(e^{it\vert D_x\vert^{\alpha}}u_{n,0})(x)\big\vert^2\,\mathrm dx\,\mathrm dt \le\frac1n.
\end{equation}
The sequence $(u_{n,0})_n$ being bounded in $L^2(\mathbb T)$, there exists a function $u_0\in L^2(\mathbb T)$ such that, up to extracting, the sequence $(u_{n,0})_n$ converges weakly to 
$u_0$ in $L^2(\mathbb T)$ and strongly in $H^{-\alpha}(\mathbb T)$, by Rellich's theorem. Let us consider the sequence of non-negative functions $(b_j)_j$ in $L^{\infty}(\mathbb T)$ whose elements are given by $b_j = b\mathbbm1_{\{b\le j\}}$ (recall that $b$ is a non-negative function). This sequence $(b_j)_j$ converges to the function $b$ in $L^1(\mathbb T)$ by the dominated convergence theorem.
Notice that the weak convergence, which holds for every $j\geq0$,
$$\forall t\in[0,T],\quad\sqrt{b_j}e^{it\vert D_x\vert^{\alpha}}u_{n,0}\underset{n\rightarrow+\infty}{\rightharpoonup}\sqrt{b_j}e^{it\vert D_x\vert^{\alpha}}u_0\quad\text{in $L^2(\mathbb T)$},$$
is a consequence of the following computations, where $\varphi\in L^2(\mathbb T)$,
\begin{align*}
	\langle\sqrt{b_j}e^{it\vert D_x\vert^{\alpha}}u_{n,0},\varphi\rangle_{L^2(\mathbb T)}
	& = \langle u_{n,0},e^{-it\vert D_x\vert^{\alpha}}\sqrt{b_j}\varphi\rangle_{L^2(\mathbb T)} \\[5pt]
	& \underset{n\rightarrow+\infty}{\longrightarrow}\langle u_0,e^{-it\vert D_x\vert^{\alpha}}\sqrt{b_j}\varphi\rangle_{L^2(\mathbb T)}
	= \langle\sqrt{b_j}e^{it\vert D_x\vert^{\alpha}}u_0,\varphi\rangle_{L^2(\mathbb T)}.
\end{align*}
We therefore deduce from results on weak convergence and Fatou's lemma that
\begin{align}\label{16012025E1}
	\int_0^T\int_{\mathbb T}b_j(x)\big\vert(e^{it\vert D_x\vert^{\alpha}}u_0)(x)\big\vert^2\,\mathrm dx\,\mathrm dt
	& \le \int_0^T\liminf_{n\rightarrow+\infty}\int_{\mathbb T}b_j(x)\big\vert(e^{it\vert D_x\vert^{\alpha}}u_{n,0})(x)\big\vert^2\,\mathrm dx\,\mathrm dt \\[5pt]
	& \le \liminf_{n\rightarrow+\infty}\int_0^T\int_{\mathbb T}b_j(x)\big\vert(e^{it\vert D_x\vert^{\alpha}}u_{n,0})(x)\big\vert^2\,\mathrm dx\,\mathrm dt. \nonumber
\end{align}
Let us split the integral in the right-hand side of the above inequality as follows
\begin{align*}
	\int_0^T\int_{\mathbb T}b_j(x)\big\vert(e^{it\vert D_x\vert^{\alpha}}u_{n,0})(x)\big\vert^2\,\mathrm dx\,\mathrm dt
	 = \int_0^T\int_{\mathbb T}b(x)\big\vert(e^{it\vert D_x\vert^{\alpha}}u_{n,0})(x)\big\vert^2\,\mathrm dx\,\mathrm dt \\[5pt]
	+ \int_0^T\int_{\mathbb T}(b_j-b)(x)\big\vert(e^{it\vert D_x\vert^{\alpha}}u_{n,0})(x)\big\vert^2\,\mathrm dx\,\mathrm dt.
\end{align*}
These two terms can be bounded by using respectively the inequality \eqref{18022024E3} and the $L^{\infty}_xL^2_t$ Strichartz estimate stated in Lemma \ref{11032024L1}, which leads to the following estimate
$$\int_0^T\int_{\mathbb T}b_j(x)\big\vert(e^{it\vert D_x\vert^{\alpha}}u_{n,0})(x)\big\vert^2\,\mathrm dx\,\mathrm dt\lesssim\frac1n + \Vert b-b_j\Vert_{L^1(\mathbb T)},$$
since the functions $u_{n,0}$ are normalized in $L^2(\mathbb T)$. We therefore deduce from \eqref{16012025E1} that
$$\int_0^T\int_{\mathbb T}b_j(x)\big\vert(e^{it\vert D_x\vert^{\alpha}}u_0)(x)\big\vert^2\,\mathrm dx\,\mathrm dt\lesssim\Vert b-b_j\Vert_{L^1(\mathbb T)}.$$
Coming back to the function $b$, this estimate and another use of Lemma \ref{11032024L1} allow to derive that
\begin{align*}
	\int_0^T\int_{\mathbb T}b(x)\big\vert(e^{it\vert D_x\vert^{\alpha}}u_0)(x)\big\vert^2\,\mathrm dx\,\mathrm dt\lesssim\Vert b-b_j\Vert_{L^1(\mathbb T)}\underset{j\rightarrow+\infty}{\longrightarrow}0.
\end{align*}
Keeping the notations of Section \ref{uc}, this proves that $u_0\in\mathcal N_{b,T}$, and therefore $u_0 \equiv 0$ by Lemma \ref{18022024E4}. We finally deduce from Lemma \ref{lem:weakobs} that
$$\underbrace{\Vert u_{n,0}\Vert^2_{L^2(\mathbb T)}}_{=\, 1}\le C_{b,T}\bigg(\underbrace{\int_0^T\int_{\mathbb T}b(x)\big\vert(e^{it\vert D_x\vert^{\alpha}}u_{n,0})(x)\big\vert^2\,\mathrm dx\,\mathrm dt}_{\underset{n\rightarrow+\infty}{\longrightarrow}0} + \underbrace{\Vert u_{n,0}\Vert^2_{H^{-\alpha}(\mathbb T)}}_{\underset{n\rightarrow+\infty}{\longrightarrow}0}\bigg).$$
This is of course absurd, and the proof of Theorem \ref{thm:fracobs} is ended.


\begin{thebibliography}{10}
	\bibitem{BBZ} J. \textsc{Bourgain}, N. \textsc{Burq} \& M. \textsc{Zworski}, \textit{Control for Schr\"odinger operators on 2-tori: rough potentials}, J. Eur. Math. Soc. \textbf{15} (2013), 1597--1628.
	\bibitem{BZ} N. \textsc{Burq} \& M. \textsc{Zworski}, \textit{Rough controls for Schr\"odinger operators on $2$-tori}, Ann. H. Lebesgue \textbf{2} (2019), 331--347.
	\bibitem{Lau} C. \textsc{Laurent}, \textit{Internal control of the {Schr{\"o}dinger} equation}, Math. Control Relat. Fields \textbf{4} (2014), 161--186.
	\bibitem{LBM} K. \textsc{Le Balc'h} \& J. \textsc{Martin}, \textit{Observability estimates for the Schr\"odinger equation in the plane with periodic bounded potentials from measurable sets}, \href{https://arxiv.org/abs/2304.08050}{preprint} (2023), to appear in Ann. Inst. Fourier.
	\bibitem{M} F. \textsc{Maci\`a}, \textit{Observability results related to fractional Schr\"odinger operators}, Vietnam J. Math. \textbf{49} (2021), 919--936.
	
	
	\bibitem{Simon} B. \textsc{Simon},  \textit{The $P(\varphi)_2$ Euclidean (quantum) field theory,} 
	Princeton Series in Physics. Princeton University Press, Princeton, N.J., 1974. xx+392 pp.

	
	\bibitem{SS} E. \textsc{Stein} \& R. \textsc{Shakarchi}, \textit{Real analysis. Measure theory, integration, and Hilbert spaces}, Princeton Lect. Anal., vol. 3, Princeton, NJ: Princeton University Press, 2005.
	
	\bibitem{ST} C. \textsc{Sun} \& N. \textsc{Tzvetkov},  \textit{Gibbs measure dynamics for the fractional NLS}, SIAM J. Math. Anal.  \textbf{52} (2020),  4638--4704. 
	
	
	\bibitem{Tz} N. \textsc{Tzvetkov}, \textit{Nonlinear PDE in the presence of singular randomness},  Eur. Math. Soc. Mag. \textbf{129} (2023), 5--12. 
	
	
\end{thebibliography}
\end{document}